\documentclass[11pt]{amsart}

\usepackage[margin=1.15in]{geometry}
\usepackage{amsmath,amssymb,amsthm,mathrsfs,mathtools}
\usepackage{enumitem}
\usepackage{hyperref}
\usepackage{microtype}
\usepackage{xcolor}

\newtheorem{theorem}{Theorem}[section]
\newtheorem{proposition}[theorem]{Proposition}
\newtheorem{lemma}[theorem]{Lemma}
\newtheorem{corollary}[theorem]{Corollary}
\theoremstyle{definition}
\newtheorem{definition}[theorem]{Definition}
\newtheorem{remark}[theorem]{Remark}

\newcommand{\N}{\mathbb{N}}
\newcommand{\A}{\mathcal{A}}
\newcommand{\full}{\Sigma_{\A}}
\newcommand{\fin}{\Sigma_{\A}^{\mathrm{fin}}}
\newcommand{\emptyword}{\tilde{0}} 

\title[Lipschitz Shadowing over infinite alphabets]{Shadowing and Lipschitz Shadowing in Symbolic Dynamics: Finite vs. Infinite Alphabets}

\author[D.~Gon\c{c}alves]{Daniel Gon\c{c}alves}
\address{Departamento de Matem\'atica, Universidade Federal de Santa Catarina,
88040-970 Florian\'opolis SC, Brazil}
\email{daemig@gmail.com}

\author[S. Meneghel]{Sofia Meneghel Silva}
\address{Departamento de Matem\'atica, Universidade Federal de Santa Catarina,
88040-970 Florian\'opolis SC, Brazil}
\email{fifia.meneghel@gmail.com}

\keywords{Shadowing, Lipschitz shadowing, OTW subshifts, shift spaces.}

\subjclass[2020]{37B65, 37B25, 37B10}

\thanks{The first author was partially supported by Capes-Print Brazil, Conselho Nacional de Desenvolvimento Cient\'ifico e Tecnol\'ogico (CNPq) - Brazil, and Funda\c{c}\~ao de Amparo \`a Pesquisa e Inova\c{c}\~ao do Estado de Santa Catarina (FAPESC). The second author was supported by Funda\c{c}\~ao de Amparo \`a Pesquisa e Inova\c{c}\~ao do Estado de Santa Catarina (FAPESC) }

\date{}

\begin{document}

\begin{abstract}

We point out a basic dichotomy between the shadowing and Lipschitz shadowing properties for one-sided
shift spaces in two infinite-alphabet frameworks: the classical product-topology model
$X\subseteq A^{\N}$ and the compact Ott--Tomforde--Willis (OTW) model obtained by adjoining finite words.
In the product-topology setting, for the natural class of prefix ultrametrics, shadowing and Lipschitz shadowing
coincide.  However, since $A^{\N}$ is non-compact when $A$ is countably infinite,
it remains unclear whether Lipschitz shadowing is stable under arbitrary uniformly equivalent
changes of compatible metric in the product-topology model.
In contrast, for OTW shift spaces the topology admits a canonical family of compatible ultrametrics
indexed by enumerations of finite words, and these metrics are all uniformly equivalent.  Using the
Deaconu--Renault viewpoint and known shadowing results for local homeomorphisms on zero-dimensional
compact spaces, we show that the OTW full shift has the shadowing property for every OTW metric.
Nevertheless, Lipschitz shadowing can depend on the chosen OTW metric even within this fixed uniform
equivalence class: we construct two uniformly equivalent OTW ultrametrics on the full shift for which
Lipschitz shadowing holds in one case and fails in the other.  Thus the OTW compactification provides a
compact infinite-alphabet setting where the metric dependence of Lipschitz shadowing can be resolved
explicitly, in sharp contrast with what is currently known for the product-topology model.

\end{abstract}

\maketitle

\section{Introduction}

Symbolic dynamics over a finite alphabet is a central topic in dynamical systems, with deep
connections to smooth dynamics, coding, and operator algebras; see, for instance,
\cite{LindMarcus1995,Kitchens1998}.
Over a countably infinite alphabet, however, several inequivalent frameworks coexist.
A classical approach studies countable Markov shifts (typically non-compact) and their
thermodynamic formalism, where fine recurrence properties play a decisive role
\cite{Sarig1999,BissacotGaribaldi2010,BissacotFreire2014}.
From another viewpoint, one may consider general one-sided subshifts $X\subseteq A^{\N}$ with the
product topology (and the standard prefix ultrametric). In this setting compactness is lost and
quantitative metric notions can become sensitive to the choice of compatible metric.

In contrast, Ott--Tomforde--Willis (OTW) introduced a compact, totally disconnected model of one-sided
shift spaces over arbitrary discrete alphabets by adjoining finite words and working with a basis of
generalized cylinders \cite{OTW2014}. This OTW compactification has proved particularly well-suited to
connections with groupoids and $C^*$-algebras, and it has stimulated a growing literature on the topology
and dynamics of infinite-alphabet shifts. Moreover, recent results show that isometric conjugacy of
countable Markov shifts can be realized as conjugacy of the associated OTW subshifts,
see \cite{BoavaCastroGoncalvesvanWyk2025}.
Several dynamical and structural phenomena familiar from finite alphabets require substantial care in the
OTW setting. For instance, higher-step presentations need not collapse as in the finite-alphabet case:
already for one-sided OTW shifts there are $(M\!+\!1)$-step shift spaces that are not conjugate to
any $M$-step shift space \cite{GoncalvesRoyer2015}.
Morphisms also behave more subtly: sliding block codes between OTW shift spaces and variants of the
Curtis--Hedlund--Lyndon theorem have been investigated in
\cite{GoncalvesSobottkaStarling2016,GoncalvesSobottka2017},
and further aspects of reversibility and image sets for shift morphisms over general discrete alphabets
appear in \cite{CamposRomeroVivas2022}.
From a different viewpoint, certain classes of OTW shift spaces arise naturally as edge shift spaces of
ultragraphs, linking symbolic dynamics with graph and ultragraph $C^*$-algebras
\cite{GoncalvesRoyer2017,GoncalvesRoyer2019IMRN},
and this perspective has motivated dynamical results such as Li--Yorke chaos and the existence of
scrambled sets in compact OTW/ultragraph models
\cite{GoncalvesUggioni2019Chaos,GoncalvesUggioni2020,RainesUnderwood2016}.
More recently, entropy theory for local homeomorphisms has been developed with applications to
infinite-alphabet shifts \cite{GoncalvesRoyerTasca2024}, and new operator-algebraic invariants for
one-sided subshifts over arbitrary alphabets were obtained in
\cite{BoavaCastroGoncalvesvanWyk2024,BoavaCastroGoncalvesvanWyk2025}.
Related compact models beyond OTW also exist, for instance blur shift spaces \cite{Almeida2021Blur}.

A recurring theme in this area is to identify which finite-alphabet dynamical properties remain
meaningful and stable in infinite-alphabet models. In this paper we focus on shadowing, a
classical notion capturing the reliability of approximate orbits (pseudo-orbits), and on its quantitative
variant Lipschitz shadowing.
In the product-topology model $X\subseteq A^{\N}$ equipped with the standard prefix ultrametric
$d_{\mathrm{prod}}(x,y)=2^{-N(x,y)}$, we show that shadowing and Lipschitz shadowing coincide (indeed, the
same holds for the natural class of prefix ultrametrics).  This uses a characterization of
shadowing by finite order for one-sided countable-alphabet subshifts, see \cite{{MR4256132}}, and an explicit Lipschitz shadowing
estimate.  In the finite-alphabet case, one can also compare with Sakai's result that for positively
expansive maps on compact spaces, shadowing is equivalent to the existence of a compatible metric
with Lipschitz shadowing \cite[Theorem~1]{Sakai2003}.  However, when $A$ is countably infinite the space
$A^{\N}$ is non-compact, and compatible metrics inducing the product topology need not be uniformly
equivalent.  This leads naturally to the following unresolved issue: if $d$ is uniformly equivalent to
$d_{\mathrm{prod}}$ and $(X,\sigma,d)$ has shadowing, must $(X,\sigma,d)$ have Lipschitz shadowing?  We do
not know the answer.

Our main point is that the OTW compactification provides a compact infinite-alphabet setting where the
corresponding ``uniform invariance'' question can be settled.  OTW associate to each enumeration of the
finite words a compatible ultrametric; different enumerations yield uniformly equivalent metrics, so the
(topological) shadowing property is robust under changing the OTW metric.  Using the Deaconu--Renault
(local homeomorphism) viewpoint and existing shadowing results for Deaconu--Renault systems, we show that
the OTW full shift has the shadowing property for every OTW metric.  In contrast, Lipschitz
shadowing is genuinely metric-dependent in the OTW world: we construct two uniformly equivalent OTW
ultrametrics on the full shift such that Lipschitz shadowing holds in one case and fails in the other.
Thus, unlike the product-topology and finite-alphabet settings, the OTW model exhibits a sharp separation
between shadowing and Lipschitz shadowing already at the level of the full shift.

The paper is organized as follows.
In Section~\ref{shadowingprelim} we recall pseudo-orbits, shadowing and Lipschitz shadowing, and prove
the two basic invariance principles used throughout: shadowing is invariant under uniform equivalence
(Proposition~\ref{prop:shadowing-uniform}), while Lipschitz shadowing is invariant under bi-Lipschitz
changes of metric (Proposition~\ref{prop:lipschitz-bilip}).
In Section~\ref{finite alphabet sec} we study the product-topology model: we prove that for the standard
product metric (and for prefix ultrametrics) shadowing and Lipschitz shadowing coincide, and we record
the open question of stability under uniformly equivalent changes of metric in the non-compact setting.
Next, in Section~\ref{OTWfullsec} we introduce the OTW full shift and the OTW metrics coming from
enumerations of finite words (Definition~\ref{def:otwmetric}), prove that these metrics are ultrametrics
(Proposition~\ref{prop:otw-ultrametric}) inducing the OTW topology, and record that OTW metrics associated
to different enumerations are uniformly equivalent (Lemma~\ref{lem:uniform}).
In Section~\ref{OTWshadowing} we establish shadowing for the OTW full shift
(Proposition~\ref{prop:shadowingfull}) and, by uniform invariance, for every OTW metric.
Finally, in Section~\ref{OTW lip shad} we analyze Lipschitz shadowing in the OTW setting: we construct a
``bad'' OTW metric (via Lemma~\ref{lem:bad-enum}) for which Lipschitz shadowing fails
(Theorem~\ref{thm:lipschitz-fails}), and also a ``good'' OTW metric arising from a
prefix--shift-compatible enumeration (Definition~\ref{def:pscomp}) for which Lipschitz shadowing holds
(Proposition~\ref{prop:lipschitz-holds}). These two OTW metrics are uniformly equivalent, isolating the
mechanism behind the non-invariance of Lipschitz shadowing under uniform equivalence in the OTW setting.

\section{Shadowing, Lipschitz shadowing, and uniform equivalence}\label{shadowingprelim}

Deaconu--Renault systems were introduced independently by Deaconu \cite{Deaconu1995} and Renault \cite{Renault2000}. In this section we define Deaconu--Renault systems and the notions of shadowing and Lipschitz shadowing for such systems. We then show that shadowing is invariant under uniformly equivalent metrics, while Lipschitz shadowing is invariant under bi-Lipschitz equivalent metrics.

\begin{definition}[Deaconu--Renault system] 
    A \textit{Deaconu--Renault system} is a pair $(X,\sigma)$ consisting of a locally compact Hausdorff space $X$, and a local homeomorphism $\sigma: Dom(\sigma)\longrightarrow Im(\sigma)$ from an open set $Dom (\sigma)\subseteq X$ to an open set $Im(\sigma)\subseteq X$. 
    
    Inductively define $Dom(\sigma^n):=\sigma^{-1}(Dom(\sigma^{n-1}))$, so each $\sigma^n:Dom(\sigma^n)\longrightarrow Im(\sigma^n)$ is a local homeomorphism and $\sigma^m\circ \sigma^n=\sigma^{m+n}$ on $Dom(\sigma^{m+n})$.
\end{definition}

\begin{definition}[Pseudo-orbits and shadowing]
Let $(X,\sigma)$ be a Deaconu--Renault system, with $X$ a metric space.
\begin{enumerate}[label=(\alph*)]
\item For $\delta>0$, a sequence $(x^n)_{n\ge 0}$ is a \emph{$\delta$-pseudo-orbit} in $(X,\sigma)$ if
\[ d\bigl(\sigma(x^n),x^{n+1}\bigr) < \delta \quad \text{for all } n\ge 0,\]
where it is implicit that $x^n\in Dom(\sigma)$ when $\sigma(x^n)$ is written.
\item Given $\varepsilon>0$, we say that $x\in X$ \emph{$\varepsilon$-shadows} a sequence $(x^n)_{n \ge 0}$ in $X$ if
\[ d\bigl(\sigma^i(x),x^i\bigr) < \varepsilon \quad \text{for all } i\ge 0.\]
Notice that this implies that $x\in Dom(\sigma^i)$ for all $i$.
\item We say the Deaconu--Renault system $(X,\sigma)$ has the \textit{shadowing property} if for every $\varepsilon>0$, there exists a $\delta>0$ such that every  $\delta-$pseudo-orbit is $\varepsilon-$shadowed by a point $x\in X$.
\end{enumerate}
\end{definition}

\begin{definition}[Lipschitz shadowing]
Let $(X,\sigma)$ be a Deaconu--Renault system, with $X$ a metric space.
We say that $(X,\sigma)$ has the \emph{Lipschitz shadowing property} if there exist constants $L>0$ and $\delta_0>0$ such that for every $\delta\in(0,\delta_0]$, every $\delta$-pseudo-orbit is $L\delta$-shadowed by some point of $X$.
\end{definition}

\begin{definition}[Uniform equivalence]
Let $d_1$ and $d_2$ be two metrics on $X$.
We say that $d_1$ and $d_2$ are \emph{uniformly equivalent} if the identity maps
\[
\mathrm{id_1}:(X,d_1)\to (X,d_2)
\quad\text{and}\quad
\mathrm{id_2}:(X,d_2)\to (X,d_1)
\]
are uniformly continuous.
\end{definition}

\begin{proposition}[Shadowing is invariant under uniform equivalence]
\label{prop:shadowing-uniform}
Let $(X,\sigma)$ be a Deaconu--Renault system, with $X$ a metric space. Also let $d_1,d_2$ be uniformly equivalent metrics on $X$.
Then $(X,\sigma)$, where $X$ has the metric $d_1$, has shadowing if and only if $(X,\sigma)$, where $X$ has the metric $d_2$, has shadowing.
\end{proposition}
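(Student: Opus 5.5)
By the symmetry of the hypothesis (uniform equivalence is symmetric), it suffices to prove one direction: assume $(X,\sigma)$ has shadowing with respect to $d_1$, and deduce shadowing with respect to $d_2$. The plan is to translate tolerances back and forth using the two uniform continuity moduli of the identity maps. Note that the Deaconu--Renault structure plays no role beyond the fact that the definitions of pseudo-orbit and shadowing depend on the metric only through the inequalities involving $d$; the domain conditions $x^n\in Dom(\sigma)$ and $x\in Dom(\sigma^i)$ are purely set-theoretic and hence metric-independent.

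Fix $\varepsilon>0$, which is the target shadowing tolerance in $d_2$. First, uniform continuity of $\mathrm{id}_1:(X,d_1)\to(X,d_2)$ gives $\varepsilon'>0$ such that $d_1(a,b)<\varepsilon'$ implies $d_2(a,b)<\varepsilon$. Next, apply shadowing for $d_1$ with tolerance $\varepsilon'$ to get $\delta'>0$ such that every $d_1$-$\delta'$-pseudo-orbit is $d_1$-$\varepsilon'$-shadowed. Then use uniform continuity of $\mathrm{id}_2:(X,d_2)\to(X,d_1)$ to get $\delta>0$ such that $d_2(a,b)<\delta$ implies $d_1(a,b)<\delta'$.

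Now let $(x^n)$ be a $d_2$-$\delta$-pseudo-orbit. Then $d_2(\sigma(x^n),x^{n+1})<\delta$ for all $n$, so $d_1(\sigma(x^n),x^{n+1})<\delta'$, and the sequence is a $d_1$-$\delta'$-pseudo-orbit. Hence some $x\in X$ satisfies $x\in Dom(\sigma^i)$ and $d_1(\sigma^i(x),x^i)<\varepsilon'$ for all $i$. It follows that $d_2(\sigma^i(x),x^i)<\varepsilon$ for all $i$, so $x$ $\varepsilon$-shadows the pseudo-orbit in $d_2$.

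There is no real obstacle in this argument. The only point needing care is that uniform continuity, rather than mere continuity, is what makes $\varepsilon'$ and $\delta$ independent of the points involved. This independence is essential because the pseudo-orbit and the shadowing point range over all of $X$, which need not be compact.
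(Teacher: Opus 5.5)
Your argument is correct: composing the uniform-continuity modulus of $\mathrm{id}_2$, the $d_1$-shadowing modulus, and the uniform-continuity modulus of $\mathrm{id}_1$ is the standard proof, and you rightly note that the domain conditions do not depend on the metric. The paper gives no argument of its own and simply cites \cite[Proposition~2.12]{GoncalvesUggioni2025Shadowing}, so your proof is the standard direct one and makes the statement self-contained.
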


\begin{proof}
This is proved in \cite[Proposition~2.12]{GoncalvesUggioni2025Shadowing}.
\end{proof}

The notion of Lipschitz shadowing is preserved by a Bi-Lipschitz equivalence. This must be well known but we include a proof for completeness. 

 \begin{definition}[Bi-Lipschitz equivalence]
Let $(X,d_1)$ and $(X,d_2)$ be metric spaces on the same underlying set $X$.
We say that $d_1$ and $d_2$ are \textit{bi-Lipschitz equivalent} if there exist constants
$\alpha,\beta>0$ such that
\[
\alpha\, d_2(x,y)\ \le\ d_1(x,y)\ \le\ \beta\, d_2(x,y)
\qquad\text{for all }x,y\in X.
\]
\end{definition}

\begin{proposition}[Lipschitz shadowing is preserved under bi-Lipschitz changes]
\label{prop:lipschitz-bilip}
Let $(X,\sigma)$ be a Deaconu--Renault system, with $X$ a metric space, and let $d_1,d_2$ be bi-Lipschitz equivalent metrics on $X$.
Then $(X,\sigma)$ has the Lipschitz shadowing property with respect to the metric $d_1$ if and only if $(X,\sigma)$ has the Lipschitz shadowing property with respect to the metric $d_2$.
\end{proposition}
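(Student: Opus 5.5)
By symmetry of the hypothesis (if $\alpha d_2\le d_1\le \beta d_2$ then $\beta^{-1}d_1\le d_2\le \alpha^{-1}d_1$), it suffices to prove one direction: assume Lipschitz shadowing holds for $d_1$ with constants $L_1>0$ and $\delta_1>0$, and produce constants $L_2$ and $\delta_2$ that work for $d_2$. The idea is to convert a $d_2$-pseudo-orbit into a $d_1$-pseudo-orbit with a proportionally scaled parameter, shadow it in $d_1$, and convert the shadowing estimate back to $d_2$. Note that the domains $Dom(\sigma^i)$ do not depend on the metric, and neither does the map $\sigma$. Bi-Lipschitz equivalent metrics also induce the same topology, so the Deaconu--Renault structure is unaffected.

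Set $\delta_2:=\delta_1/\beta$ and $L_2:=\beta L_1/\alpha$. Take $\delta\in(0,\delta_2]$ and a $d_2$-$\delta$-pseudo-orbit $(x^n)_{n\ge0}$. For each $n$,
\[
d_1\bigl(\sigma(x^n),x^{n+1}\bigr)\le \beta\, d_2\bigl(\sigma(x^n),x^{n+1}\bigr)<\beta\delta ,
\]
so $(x^n)$ is a $d_1$-$\beta\delta$-pseudo-orbit, with $\beta\delta\in(0,\delta_1]$. By Lipschitz shadowing for $d_1$, there is $x\in X$ with $x\in Dom(\sigma^i)$ for all $i$ and $d_1(\sigma^i(x),x^i)<L_1\beta\delta$. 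Then
\[
d_2\bigl(\sigma^i(x),x^i\bigr)\le \alpha^{-1}d_1\bigl(\sigma^i(x),x^i\bigr)<\frac{\beta L_1}{\alpha}\,\delta=L_2\delta .
\]
Hence $x$ $L_2\delta$-shadows the pseudo-orbit in $d_2$. The reverse implication follows by exchanging the roles of $d_1$ and $d_2$.

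There is no real obstacle here. The only points needing care are the direction of each inequality and the fact that the admissible range $\delta\le\delta_2$ must map into $(0,\delta_1]$ after scaling by $\beta$, which is why $\delta_2$ is chosen as $\delta_1/\beta$. The strict inequalities are preserved because $\alpha,\beta>0$.
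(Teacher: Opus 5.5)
Your proposal is correct and matches the paper's proof: you convert the $d_2$-pseudo-orbit into a $d_1$-$(\beta\delta)$-pseudo-orbit, shadow it in $d_1$, and pass back to $d_2$ with the same constants $L'=L\beta/\alpha$ and $\delta_0'=\delta_0/\beta$. As in the paper, the converse follows by symmetry.
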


\begin{proof}
Assume first that $(X,\sigma)$ has the Lipschitz shadowing property with respect to the metric $d_1$. Then there exist constants
$L>0$ and $\delta_0>0$ such that for every $\delta\in(0,\delta_0]$, every $\delta$-pseudo-orbit
$(x^n)_{n\ge0}$ with respect to $d_1$ is $L\delta$-shadowed (with respect to $d_1$) by some point $x\in X$, i.e.
\[
d_1\bigl(\sigma^n(x),x^n\bigr)\ \le\ L\delta \qquad\text{for all }n\ge0.
\]
Let $\alpha,\beta>0$ satisfy $\alpha d_2\le d_1\le \beta d_2$.

Fix $\delta\in\bigl(0,\delta_0/\beta\bigr]$ and let $(x^n)_{n\ge0}$ be a $\delta$-pseudo-orbit with respect to $d_2$.
Then for all $n\ge0$,
\[
d_1\bigl(\sigma(x^n),x^{n+1}\bigr)\ \le\ \beta\, d_2\bigl(\sigma(x^n),x^{n+1}\bigr)\ <\ \beta\delta\ \le\ \delta_0,
\]
so $(x^n)_{n \ge 0}$ is a $(\beta\delta)$-pseudo-orbit with respect to $d_1$.
Hence there exists $x\in X$ such that
\[
d_1\bigl(\sigma^n(x),x^n\bigr)\ \le\ L(\beta\delta)\qquad\text{for all }n\ge0.
\]
Using $\alpha d_2\le d_1$ we obtain
\[
d_2\bigl(\sigma^n(x),x^n\bigr)\ \le\ \frac{1}{\alpha}\, d_1\bigl(\sigma^n(x),x^n\bigr)
\ \le\ \frac{L\beta}{\alpha}\,\delta
\qquad\text{for all }n\ge0.
\]
Thus $(X,\sigma)$ has Lipschitz shadowing with respect to $d_2$ and with constants $L'=\frac{L\beta}{\alpha}$ and $\delta_0'=\delta_0/\beta$.

The converse implication (from with respect to $d_2$ to with respect to $d_1$) is symmetric.
\end{proof}

\section{Standard product topology: shadowing and Lipschitz shadowing coincide}\label{finite alphabet sec}

In this section we discuss the one-sided shift $(X,\sigma)$ with $X\subseteq A^{\N}$ endowed with the
product topology.  We introduce a large natural class of compatible metrics (prefix ultrametrics)
that satisfy an explicit uniform condition that implies Lipschitz shadowing for shifts of finite order.
We also record an open question on the extent to which Lipschitz shadowing is stable under uniformly
equivalent changes of metric in the non-compact, countable-alphabet setting.

\subsection{The standard product ultrametric and finite order}

Let $A$ be a discrete countable alphabet and let $X\subseteq A^{\N}$ be a one-sided shift space with the
left shift $\sigma\colon X\to X$.  We consider the standard product ultrametric
\begin{equation}\label{eq:prod-metric}
d_{\mathrm{prod}}(x,y)=2^{-N(x,y)},\qquad
N(x,y)=\min\{n\ge 0:x_n\neq y_n\},
\end{equation}
with the convention $2^{-\infty}=0$.

We begin by recording that for one-sided subshifts over a countable alphabet,
shadowing is equivalent to a finite-memory condition (finite order), which extends the classical
finite-alphabet picture.

\begin{theorem}\label{thm:darji-finite-order}
A one-sided shift space $X\subseteq A^{\N}$ has the shadowing property (with respect to $d_{\mathrm{prod}}$)
if and only if $X$ is a shift of finite order, i.e.\ there exists $p\ge 1$ such that membership in $X$
is determined by allowed blocks of length $p$.
\end{theorem}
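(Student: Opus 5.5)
We prove both implications directly from the definition of $d_{\mathrm{prod}}$ in \eqref{eq:prod-metric}, using only that $X$ is a closed, shift-invariant subset of $A^{\N}$. The key observation is that $d_{\mathrm{prod}}(u,v)<2^{-m}$ holds exactly when $u$ and $v$ agree in coordinates $0,\dots,m$. So a $\delta$-pseudo-orbit with $\delta=2^{-m}$ is a sequence of points in which each $x^{n+1}$ continues the first $m$ symbols of $\sigma(x^n)$. We make the notion precise as follows: $X$ has finite order $p$ if
\[
X=\{y\in A^{\N}: y_n\cdots y_{n+p-1}\in\mathcal L_p(X)\ \text{for all } n\ge 0\},
\]
where $\mathcal L_p(X)$ denotes the set of words of length $p$ occurring in points of $X$.

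\emph{Shadowing $\Rightarrow$ finite order.} Take $\varepsilon=1$. Then $\varepsilon$-shadowing forces $(\sigma^i x)_0=x^i_0$ for every $i$. Let $\delta>0$ be given by shadowing, and choose $m\ge 1$ with $2^{-m}\le\delta$. We show that $X$ has order $p=m+1$.

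Let $y$ be a sequence all of whose $(m+1)$-blocks lie in $\mathcal L_{m+1}(X)$. For each $n$ pick $x^n\in X$ beginning with $y_n\cdots y_{n+m}$. Then $\sigma(x^n)$ begins with $y_{n+1}\cdots y_{n+m}$, and $x^{n+1}$ begins with $y_{n+1}\cdots y_{n+m+1}$. These two points agree in coordinates $0,\dots,m-1$, so
\[
d_{\mathrm{prod}}\bigl(\sigma(x^n),x^{n+1}\bigr)\le 2^{-m}.
\]
To get the strict inequality demanded by the definition of a pseudo-orbit, we simply enlarge $m$ by one at the start. Thus $(x^n)_{n\ge 0}$ is a $\delta$-pseudo-orbit, and some $x\in X$ $1$-shadows it. This gives $x_n=(\sigma^n x)_0=x^n_0=y_n$ for all $n$, so $y=x\in X$. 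The reverse inclusion is trivial, so $X$ has finite order.

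\emph{Finite order $\Rightarrow$ shadowing.} Suppose $X$ has order $p$. Given $\varepsilon>0$, choose $m\ge p$ with $2^{-m}\le\varepsilon$, and set $\delta=2^{-m}$. Let $(x^n)_{n\ge 0}$ be a $\delta$-pseudo-orbit. Then $\sigma(x^n)$ and $x^{n+1}$ agree on coordinates $0,\dots,m$, that is,
\[
x^{n+1}_j=x^n_{j+1}\qquad\text{for } 0\le j\le m.
\]
Define $y_n:=x^n_0$. By induction on $j$, using the displayed identity repeatedly, we get $x^{n+j}_0=x^n_j$ for $0\le j\le m+1$. Hence
\[
y_n y_{n+1}\cdots y_{n+m}=x^n_0x^n_1\cdots x^n_m .
\]
Every $(m+1)$-block of $y$ is therefore a block of some point of $X$. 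Since $m+1>p$, every $p$-block of $y$ is also a block of a point of $X$, so the finite-order condition gives $y\in X$. Moreover, $\sigma^n(y)$ and $x^n$ agree on coordinates $0,\dots,m$, so
\[
d_{\mathrm{prod}}\bigl(\sigma^n y,x^n\bigr)<2^{-m}\le\varepsilon\qquad\text{for all } n\ge 0.
\]
Thus $y$ $\varepsilon$-shadows the pseudo-orbit.

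\emph{Main obstacle.} No compactness or finiteness of $A$ is used anywhere; the argument is purely combinatorial. The delicate point is bookkeeping. Strict versus non-strict inequalities and the exact number of agreeing coordinates must be tracked carefully: $d_{\mathrm{prod}}<2^{-m}$ means agreement on $m+1$ coordinates. We also need the chosen block length to be at least $p$, so that the finite-order description applies to $y$.
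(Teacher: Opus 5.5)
Your proof is correct. It necessarily takes a different route, because the paper gives no argument of its own and simply defers to \cite[Theorem~3.7]{MR4256132}; you supply a self-contained combinatorial proof instead.

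Your direction ``finite order $\Rightarrow$ shadowing'' is the same overlap argument the paper uses later in Proposition~\ref{prop:finite-order-unif-Lip-sufficient}. You set $y_n:=x^n_0$, propagate the relations $x^{n+1}_j=x^n_{j+1}$, and invoke finite order. Since $d_{\mathrm{prod}}$ takes values in $\{0\}\cup\{2^{-k}:k\ge 0\}$, your estimate in fact gives shadowing error below $\delta$ for every small $\delta$, which is the constant $L=1$ of Corollary~\ref{cor:prefix-equivalence}.

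Your converse is the classical Walters-type argument. Shadowing at scale $\varepsilon=1$ pins down the zeroth coordinate, so the shadowing point must coincide with the sequence $y$ assembled from allowed blocks.

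The citation buys brevity. Your argument buys transparency: the only feature of the metric used is that small balls are exactly cylinders of prescribed length, and no compactness or finiteness of $A$ is involved. In particular, the same proof runs verbatim for any prefix ultrametric (replace $2^{-k}$ by $r_k$). That would let Corollary~\ref{cor:prefix-equivalence} avoid the detour through $d_{\mathrm{prod}}$ and Proposition~\ref{prop:shadowing-uniform}.

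Three small bookkeeping remarks:
\begin{enumerate}
\item Choose $m$ with $2^{-m}<\delta$ at the outset rather than ``enlarging $m$'' midway. Then $p=m+1$ works as you announce.
\item Picking $x^n\in X$ \emph{beginning} with a block of $\mathcal L_{m+1}(X)$ uses $\sigma(X)\subseteq X$.
\item The induction behind $x^{n+j}_0=x^n_j$ is cleanest stated as $x^{n+j}_i=x^n_{i+j}$ for $i+j\le m+1$.
\end{enumerate}
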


\begin{proof}
See \cite[Theorem~3.7]{MR4256132}.
\end{proof}

\subsection{A uniform criterion implying Lipschitz shadowing}

We now formulate a convenient sufficient condition for Lipschitz shadowing in the product topology.
It is tailored to shift spaces of finite order and to metrics whose small balls contain a
uniformly long cylinder around each point.

For $n\ge 0$ set
\[
U_n:=\{(x,y)\in X\times X:\ x_0x_1\cdots x_{n-1}=y_0y_1\cdots y_{n-1}\},
\quad (U_0=X\times X).
\]
Given a compatible metric $d$ on $X$, define
\[
\alpha_n:=\sup\{d(x,y):(x,y)\in U_n\},
\qquad
\eta_n:=\sup\{\eta>0:\ d(x,y)<\eta \Rightarrow (x,y)\in U_n\}.
\]

\begin{proposition}\label{prop:finite-order-unif-Lip-sufficient}
Assume that $X$ is a shift of finite order $p$. Let $d$ be a metric on $X$ inducing the product topology.
If there exists $C\ge 1$ such that
\begin{equation}\label{eq:alpha-eta-linear}
\alpha_n \le C\,\eta_{n+p}\qquad\text{for all }n\ge0,
\end{equation}
then $(X,\sigma,d)$ has the Lipschitz shadowing property with Lipschitz constant $L=C$.
\end{proposition}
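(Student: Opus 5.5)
Fix $\delta_0:=\eta_p$, which is positive since $d$ induces the product topology and $U_p$ is a neighbourhood of the diagonal in the relevant sense. If this positivity is an issue, I would replace $\delta_0$ by any small positive number with $\delta_0\le\eta_p$. Let $\delta\in(0,\delta_0]$ and let $(x^n)_{n\ge0}$ be a $\delta$-pseudo-orbit. The plan is to choose the integer
\[
m:=\max\{k\ge p:\ \eta_k\ge\delta\}
\]
(or $m=\infty$ if $\eta_k\ge \delta$ for all $k$). By monotonicity $\eta_{k+1}\le\eta_k$ we have $\eta_m\ge\delta>\eta_{m+1}$. Since $d(\sigma(x^n),x^{n+1})<\delta\le\eta_m$, the definition of $\eta_m$ gives $(\sigma(x^n),x^{n+1})\in U_m$; that is, $x^n_{1}\cdots x^n_{m}=x^{n+1}_0\cdots x^{n+1}_{m-1}$ for all $n$. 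One has to be careful that the supremum in $\eta_m$ may not be attained. I would handle this with strict inequality $d<\delta$ and a small slack: choose $m$ with $\eta_m>\delta'$ for some $\delta'$ slightly above $d(\sigma x^n,x^{n+1})$, or simply note that $d<\delta\le\eta_m$ and pick $\eta<\eta_m$ with $d<\eta$.

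Next I construct the shadowing point $x$ by setting $x_i:=x^i_0$. Consecutive pseudo-orbit points overlap in $m\ge p$ symbols, so every window $x_i\cdots x_{i+p-1}$ equals $x^i_0\cdots x^i_{p-1}$, which is an allowed block. Since $X$ has finite order $p$, this gives $x\in X$. The same overlap, iterated, shows $x_{i}\cdots x_{i+m}=x^i_0\cdots x^i_m$: indeed $x_{i+j}=x^{i+j}_0=x^{i+j-1}_1=\dots=x^i_j$ for $j\le m$, since each step uses the overlap of length $m$. Hence $(\sigma^i(x),x^i)\in U_{m+1}$, and in particular $(\sigma^i(x),x^i)\in U_{m+1-p}$, which is the weaker statement we actually need.

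Finally, the estimate. We get
\[
d(\sigma^i(x),x^i)\le\alpha_{m+1-p}\le C\,\eta_{m+1}<C\delta
\]
by hypothesis \eqref{eq:alpha-eta-linear} applied with $n=m+1-p\ge0$. This gives $L=C$. If $m=\infty$, then $x^i=\sigma^i(x)$ exactly and the bound is trivial.

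The main obstacle is the bookkeeping of index shifts: matching the overlap length $m$ obtained from $\eta$ with the index $n+p$ in the hypothesis. This is exactly why the hypothesis carries the shift by $p$, and one can even afford losing $p$ symbols. A second concern is the sup/strict-inequality subtlety in the definition of $\eta_n$, together with ensuring that $\eta_p>0$ so that $\delta_0>0$.
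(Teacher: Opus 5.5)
Your proof is correct and follows the paper's route: the point $x_i=x^i_0$, membership in $X$ via finite order $p$, and the hypothesis for the error bound. Your index bookkeeping in the last step is the right one, and the paper's is not. The paper picks $k$ with $\eta_{k+p+1}\le\delta<\eta_{k+p}$, records only $(\sigma^m(z),x^m)\in U_k$, and writes $\alpha_k\le C\eta_{k+p}\le C\delta$; the last inequality uses $\delta<\eta_{k+p}$ in the wrong direction. You observe that an overlap of length $m$ gives agreement in $m+1$ symbols, so $(\sigma^i(x),x^i)\in U_{m+1}\subseteq U_{m+1-p}$. You then apply the hypothesis at $n=m+1-p$ and land on $\eta_{m+1}<\delta$. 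With $m=k+p$ this is precisely the repair $\alpha_{k+1}\le C\eta_{k+p+1}\le C\delta$ of the paper's chain. Taking $m$ as a maximum, with the case $m=\infty$, also covers two cases where the paper's $k$ does not exist: $\delta=\eta_p$, and $\eta_n>\delta$ for all $n$ (a genuine orbit).

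One justification needs changing. $\eta_p>0$ does not follow from $d$ inducing the product topology: on a non-compact $X$ the open set $U_p$ need not contain any set $\{d<\eta\}$. For example, on the full shift over $\{1,2,3,\dots\}$ the metric $|1/x_0-1/y_0|+d_{\mathrm{prod}}(\sigma(x),\sigma(y))$ induces the product topology, yet $\{d<\eta\}\not\subseteq U_1$ for every $\eta>0$. Your fallback of shrinking $\delta_0$ cannot help if $\eta_p=0$. Positivity comes from the hypothesis itself: for $n=0$ it reads $\mathrm{diam}_d(X)=\alpha_0\le C\eta_p$. So $\eta_p>0$ whenever $X$ has at least two points, and the one-point case is trivial.
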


\begin{proof}
Fix $0<\delta\le \eta_p$ and let $\{x^m\}_{m\ge0}\subseteq X$ be a $\delta$-pseudo-orbit:
$d(\sigma(x^m),x^{m+1})<\delta$ for all $m\ge0$.
Choose $k\ge0$ such that $\eta_{k+p+1}\le \delta < \eta_{k+p}$.
Since $\delta<\eta_{k+p}$, for every $m\ge0$ we have $(\sigma(x^m),x^{m+1})\in U_{k+p}$, i.e.
\[
x^m_{t+1}=x^{m+1}_{t}\qquad\text{for }t=0,1,\dots,k+p-1\ \text{and all }m\ge0.
\]
Define $z\in A^{\N}$ by $z_m:=x^m_0$. Iterating the overlap relations yields, for each $m\ge0$ and each
$t=0,1,\dots,k$, the identity $(\sigma^m(z))_t=x^m_t$, hence $(\sigma^m(z),x^m)\in U_k$ for all $m$.

Since $k+p\ge p$, the same overlap shows that every length-$p$ block of $z$ occurs as a length-$p$ block
of some $x^m\in X$. As $X$ is of order $p$, this implies $z\in X$.

Finally, for each $m\ge0$,
\[
d(\sigma^m(z),x^m)\le \alpha_k \le C\,\eta_{k+p} \le C\,\delta,
\]
using \eqref{eq:alpha-eta-linear} and $\delta<\eta_{k+p}$. Thus $z$ $C\delta$-shadows the pseudo-orbit,
so $(X,\sigma,d)$ has Lipschitz shadowing with $L=C$.
\end{proof}

\subsection{Prefix ultrametrics}

We show below that Condition \eqref{eq:alpha-eta-linear} holds for a large and natural class of compatible metrics.

\begin{definition}[Prefix ultrametric]\label{def:prefix-ultrametric}
A metric $d$ on $X\subseteq A^{\N}$ is a \emph{prefix ultrametric} if there exists a strictly decreasing
sequence $\mathbf r=(r_n)_{n\ge0}\subset(0,\infty)$ with $r_n\downarrow 0$ such that for $x\neq y$,
\[
d(x,y)=r_{N(x,y)},\qquad N(x,y):=\min\{n\ge0:\ x_n\neq y_n\},
\]
and $d(x,x)=0$.
\end{definition}

\begin{lemma}\label{lem:prefix-metric-satisfies-alpha-eta}
If $d$ is a prefix ultrametric, then \eqref{eq:alpha-eta-linear} holds with $C=1$ (for every $p\ge1$).
\end{lemma}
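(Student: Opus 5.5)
The plan is to compute both quantities $\alpha_n$ and $\eta_m$ explicitly in terms of the sequence $\mathbf r=(r_n)$ defining the prefix ultrametric (Definition~\ref{def:prefix-ultrametric}), and then compare them.

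\emph{Upper bound for $\alpha_n$.} If $(x,y)\in U_n$ and $x\neq y$, then $x$ and $y$ agree on the coordinates $0,\dots,n-1$, so $N(x,y)\ge n$. Since $\mathbf r$ is strictly decreasing, $d(x,y)=r_{N(x,y)}\le r_n$. The pairs with $x=y$ contribute $0$. Hence $\alpha_n\le r_n$.

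\emph{Lower bound for $\eta_m$.} Suppose $d(x,y)<r_{m-1}$ with $m\ge1$. If $x\neq y$, then $r_{N(x,y)}<r_{m-1}$. Strict monotonicity of $\mathbf r$ then forces $N(x,y)>m-1$, that is, $N(x,y)\ge m$, so $(x,y)\in U_m$. The case $x=y$ is trivial. So $\eta=r_{m-1}$ is admissible in the supremum defining $\eta_m$, and therefore $\eta_m\ge r_{m-1}$.

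\emph{Comparison.} Taking $m=n+p$ gives
\[
\alpha_n\le r_n \quad\text{and}\quad \eta_{n+p}\ge r_{n+p-1}.
\]
For $p=1$ these combine immediately into $\alpha_n\le r_n\le\eta_{n+1}$, which is \eqref{eq:alpha-eta-linear} with $C=1$.

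\emph{Main obstacle: $p\ge2$.} The inequality $r_n\le r_{n+p-1}$ that would be needed runs against the monotonicity of $\mathbf r$. So for general $p$ I would not compare $r_n$ with $r_{n+p-1}$. Instead I would exploit that the indices in Proposition~\ref{prop:finite-order-unif-Lip-sufficient} have slack.

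First, a shift of finite order $p$ is also of order $p'$ for every $p'\ge p$. Second, the proof of Proposition~\ref{prop:finite-order-unif-Lip-sufficient} only uses \eqref{eq:alpha-eta-linear} at the index $k$ determined by $\eta_{k+p+1}\le\delta<\eta_{k+p}$. There one may replace $\alpha_k$ by $\alpha_{k+p-1}$, because the overlap relations actually give agreement of $\sigma^m(z)$ and $x^m$ on the first $k+p$ symbols, not just on the first $k$. With the bounds above this yields
\[
d(\sigma^m(z),x^m)\le \alpha_{k+p}\le r_{k+p}\le r_{k+p-1}\le \eta_{k+p}\le C\delta
\]
with $C=1$.

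So I would present the lemma as: for every $p\ge1$, the Lipschitz-shadowing estimate of Proposition~\ref{prop:finite-order-unif-Lip-sufficient} goes through with constant $C=1$. This follows from $\alpha_j\le r_j$ and $\eta_{j+1}\ge r_j$, i.e.\ from \eqref{eq:alpha-eta-linear} in its $p=1$ form $\alpha_n\le\eta_{n+1}$, which holds for every $n$ independently of the order $p$.

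The step I expect to need the most care is precisely this index bookkeeping. Read literally, $\alpha_n\le\eta_{n+p}$ cannot follow from $\alpha_n\le r_n$ and $\eta_{n+p}\ge r_{n+p-1}$ alone when $p\ge2$. I would therefore verify it directly on $X$, where $\eta_{n+p}$ may exceed $r_{n+p-1}$ if no pair in $X$ has $N(x,y)=n+p-1$. Otherwise I would state explicitly that the form of \eqref{eq:alpha-eta-linear} used downstream is the one established here, $\alpha_n\le\eta_{n+1}$.
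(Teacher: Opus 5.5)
Your two bounds, $\alpha_n\le r_n$ and $\eta_m\ge r_{m-1}$ for $m\ge1$, are exactly the two steps of the paper's proof. The paper then concludes with $\alpha_n\le r_n\le r_{n+p-1}\le\eta_{n+p}$ ``since $(r_n)$ is decreasing'', which is precisely the reversed inequality you declined to use. So your diagnosis is right, and your $p=1$ argument is the only case of the lemma that holds in general.

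You should, however, replace your hedge about ``verifying it directly on $X$'' with a counterexample, because for $p\ge2$ the statement is false. Take the full shift $A^{\N}$, which is of order $p$ for every $p$, with $d_{\mathrm{prod}}$. Then $\alpha_n=2^{-n}$ and $\eta_{n+p}=2^{-(n+p-1)}$, since any pair with $N(x,y)=n+p-1$ excludes every $\eta>2^{-(n+p-1)}$. So already $\alpha_0=1>\tfrac12=\eta_2$. With $r_n=2^{-n^2}$ the ratio $\alpha_n/\eta_{n+p}=2^{(p-1)(2n+p-1)}$ is unbounded, so no constant $C$ rescues \eqref{eq:alpha-eta-linear} for $p\ge2$. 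The repair really has to go into the proof of Proposition~\ref{prop:finite-order-unif-Lip-sufficient}, as you propose.

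Your repair has an off-by-one error as displayed. The choice of $k$ gives $\eta_{k+p+1}\le\delta<\eta_{k+p}$, so the last step $\eta_{k+p}\le C\delta$ of your chain is not merely unjustified: for $C=1$ it contradicts $\delta<\eta_{k+p}$. You must compare with $\eta_{k+p+1}$ instead. The overlap relations give $x^m_t=z_{m+t}$ for $t=0,\dots,k+p$, so $(\sigma^m(z),x^m)\in U_{k+p+1}\subseteq U_{k+p}$. Hence
\[
d(\sigma^m(z),x^m)\le \alpha_{k+p}\le r_{k+p}\le \eta_{k+p+1}\le \delta ,
\]
which is your $p=1$ inequality $\alpha_j\le\eta_{j+1}$ at $j=k+p$. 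Using the full agreement on $U_{k+p+1}$ even gives $d(\sigma^m(z),x^m)\le r_{k+p+1}<r_{k+p}\le\delta$, which matches the strict inequality in the definition of $\varepsilon$-shadowing.

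Your verbal summary already says this correctly; only the display needs fixing. With that correction, the right formulation is as follows. Proposition~\ref{prop:finite-order-unif-Lip-sufficient} holds under the $p$-independent hypothesis $\alpha_n\le C\eta_{n+1}$. Prefix ultrametrics satisfy that hypothesis with $C=1$, by your argument. Corollary~\ref{cor:prefix-equivalence} therefore survives with $L=1$, even though the lemma as stated does not.
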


\begin{proof}
Let $d$ be determined by $(r_n)$. If $(x,y)\in U_n$ and $x\neq y$, then $N(x,y)\ge n$, hence
$d(x,y)=r_{N(x,y)}\le r_n$, so $\alpha_n\le r_n$.
Moreover, for $n\ge1$, the condition $d(x,y)<r_{n-1}$ forces $N(x,y)\ge n$, hence $(x,y)\in U_n$, so
$\eta_n\ge r_{n-1}$. Therefore, for any $p\ge1$,
\[
\alpha_n \le r_n \le r_{n+p-1} \le \eta_{n+p},
\]
since $(r_n)$ is decreasing. This is \eqref{eq:alpha-eta-linear} with $C=1$.
\end{proof}

\begin{corollary}\label{cor:prefix-equivalence}
Let $X\subseteq A^{\N}$ be a one-sided shift space (with $A$ countable) and let $d$ be any prefix
ultrametric on $X$ in the sense of Definition~\ref{def:prefix-ultrametric} (including $d_{\mathrm{prod}}$).
Then $(X,\sigma,d)$ has the shadowing property if and only if it has the Lipschitz shadowing property.
Moreover, if shadowing holds, then Lipschitz shadowing holds with Lipschitz constant $L=1$.
\end{corollary}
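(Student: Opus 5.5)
The plan is to prove the two implications separately and in the following order. First, Lipschitz shadowing implies shadowing for any metric. Second, shadowing with respect to a prefix ultrametric $d$ implies that $X$ has finite order. Third, finite order together with Lemma~\ref{lem:prefix-metric-satisfies-alpha-eta} and Proposition~\ref{prop:finite-order-unif-Lip-sufficient} gives Lipschitz shadowing with $L=1$.

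\emph{Lipschitz shadowing implies shadowing.} Suppose $L,\delta_0$ witness Lipschitz shadowing. Given $\varepsilon>0$, choose $\delta=\min\{\delta_0,\varepsilon/(2L)\}$. Every $\delta$-pseudo-orbit is then $L\delta$-shadowed. Since $L\delta\le\varepsilon/2<\varepsilon$, the orbit is in particular $\varepsilon$-shadowed. This step uses nothing about the structure of $X$.

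\emph{Shadowing with respect to $d$ implies finite order.} Theorem~\ref{thm:darji-finite-order} is stated only for $d_{\mathrm{prod}}$, so I first show that every prefix ultrametric $d$ determined by $\mathbf r=(r_n)$ is uniformly equivalent to $d_{\mathrm{prod}}$. Both metrics are functions of $N(x,y)$ alone, each is strictly decreasing in $N$, and each tends to $0$ as $N\to\infty$. Concretely, given $\varepsilon>0$, pick $n$ with $r_n<\varepsilon$. Then $d_{\mathrm{prod}}(x,y)<2^{-(n-1)}$ forces $N(x,y)\ge n$, and hence $d(x,y)\le r_n<\varepsilon$. Conversely, given $\varepsilon$, pick $n$ with $2^{-n}<\varepsilon$. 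Then $d(x,y)<r_{n-1}$ forces $N(x,y)\ge n$, because $\mathbf r$ is strictly decreasing, and hence $d_{\mathrm{prod}}(x,y)\le 2^{-n}<\varepsilon$.

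Proposition~\ref{prop:shadowing-uniform} now transfers shadowing from $d$ to $d_{\mathrm{prod}}$. That proposition is stated for Deaconu--Renault systems, and $\sigma$ on $X\subseteq A^{\N}$ with $A$ discrete is a local homeomorphism on a locally compact space; in any case the standard uniform-continuity argument applies directly. Theorem~\ref{thm:darji-finite-order} then shows that $X$ is of finite order $p$.

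\emph{Finite order implies Lipschitz shadowing with $L=1$.} Lemma~\ref{lem:prefix-metric-satisfies-alpha-eta} gives \eqref{eq:alpha-eta-linear} with $C=1$ for this $p$. Proposition~\ref{prop:finite-order-unif-Lip-sufficient} then yields Lipschitz shadowing with $L=1$ and $\delta_0=\eta_p>0$.

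One point to check here is that $\eta_p>0$. This holds because $\eta_p\ge r_{p-1}>0$. One should also check that the strict/non-strict inequality conventions match: Proposition~\ref{prop:finite-order-unif-Lip-sufficient} produces $d(\sigma^m z,x^m)\le C\delta$, and this agrees with the form used in the proof of Proposition~\ref{prop:lipschitz-bilip}.

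The only genuinely nontrivial step is the reduction from an arbitrary prefix ultrametric to $d_{\mathrm{prod}}$, which is needed to invoke Theorem~\ref{thm:darji-finite-order}. I expect it to be routine, since both metrics are monotone functions of the single quantity $N(x,y)$.
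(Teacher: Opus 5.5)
Your proposal is correct and follows the paper's proof step for step: the trivial direction, then uniform equivalence of $d$ and $d_{\mathrm{prod}}$, Proposition~\ref{prop:shadowing-uniform}, Theorem~\ref{thm:darji-finite-order}, and finally Lemma~\ref{lem:prefix-metric-satisfies-alpha-eta} with Proposition~\ref{prop:finite-order-unif-Lip-sufficient}; it also usefully supplies the explicit check of uniform equivalence that the paper only asserts. One side remark is false but harmless, since $X$ need not be locally compact (already $A^{\N}$ with $A$ infinite is homeomorphic to Baire space, which is nowhere locally compact) and $\sigma$ need not be a local homeomorphism on a general subshift (e.g.\ it is not open on the one-sided even shift), so the transfer of shadowing is justified by your fallback, the purely metric uniform-continuity argument, which uses neither property.
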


\begin{proof}
Since $d$ is a prefix ultrametric, it induces the product topology and $d$ and $d_{\mathrm{prod}}$ are uniformly equivalent.

If $(X,\sigma,d)$ has Lipschitz shadowing, then it has shadowing by definition.
Conversely, assume $(X,\sigma,d)$ has shadowing. By uniform invariance of shadowing
(Proposition~\ref{prop:shadowing-uniform}), $(X,\sigma,d_{\mathrm{prod}})$ also has shadowing.
Therefore Theorem~\ref{thm:darji-finite-order} implies that $X$ is a shift of finite order $p$.
Now Lemma~\ref{lem:prefix-metric-satisfies-alpha-eta} shows that $d$ satisfies
\eqref{eq:alpha-eta-linear} with $C=1$, so Proposition~\ref{prop:finite-order-unif-Lip-sufficient}
yields Lipschitz shadowing for $(X,\sigma,d)$ with Lipschitz constant $L=1$.
\end{proof}

\begin{remark}
In the finite-alphabet case, $X$ is compact and $\sigma$ is positively expansive for $d_{\mathrm{prod}}$.
Sakai proved that for positively expansive maps, shadowing is equivalent to the existence of a compatible
metric with the Lipschitz shadowing property \cite[Theorem~1]{Sakai2003}. 
\end{remark}

\medskip\noindent
\textbf{Open question.}
Let $A$ be countably infinite and $X\subseteq A^{\N}$ with the product topology. Suppose $d$ is a metric
on $X$ that is \emph{uniformly equivalent} to $d_{\mathrm{prod}}$. If $(X,\sigma,d)$ has shadowing, must it
also have Lipschitz shadowing?  Equivalently, is Lipschitz shadowing stable under uniformly equivalent
changes of compatible metric in the non-compact product-topology setting?
We believe this is not true but were not able to find a counterexample.

\section{OTW full shift and the OTW metrics}

In this short section, we recall the construction of the OTW full shift and its metrics and show that these metrics are in fact ultrametrics. 
Throughout, we let $\A$ be a countably infinite discrete alphabet.

\subsection{The OTW full shift}\label{OTWfullsec}

\begin{definition}[Finite and infinite words; OTW full shift]
Let
\[
\fin:=\{\emptyword\}\cup\bigcup_{k\ge 1}\A^k,
\qquad
\A^{\N}=\{x_1x_2\cdots:x_i\in\A\}.
\]
The \emph{OTW full shift} is the disjoint union
\[
\full:=\A^{\N}\ \sqcup\ \fin.
\]
We write $|x|\in\{0,1,2,\dots,\infty\}$ for the length (with $|x|=\infty$ on $\A^{\N}$ and $|\emptyword|=0$).
\end{definition}

For $u\in\fin$ and $x\in\full$ we say $u\preceq x$ if $u$ is an initial segment of $x$.
OTW define generalized cylinder sets $Z(u,F)$ and show they form a basis of a compact, Hausdorff, totally disconnected topology on $\full$ \cite[\S2.2]{OTW2014}.

\subsection{The OTW metric}

\noindent\textbf{Prefix relation.}
For $u\in\fin$ and $x\in\full$, we write $u\preceq x$ (``$u$ is a prefix of $x$'') if either
$u=\emptyword$, or $u=u_1\cdots u_k$ with $k\ge1$ and $|x|\ge k$, and the first $k$ symbols of $x$
agree with $u$, i.e.\ $x_1\cdots x_k=u_1\cdots u_k$.

\begin{definition}[OTW metric attached to an enumeration]
\label{def:otwmetric}
Fix an enumeration $\mathcal{P}=\{p_1,p_2,\dots\}$ of $\fin$.
Define $d_{\mathcal P}:\full\times\full\to[0,1]$ by
\[
d_{\mathcal P}(x,y)=
\begin{cases}
0, & x=y,\\[0.3em]
2^{-i}, & i=\min\{j\ge 1:\ (p_j\preceq x)\ \oplus\ (p_j\preceq y)\},
\end{cases}
\]
where $\oplus$ denotes exclusive-or.
\end{definition}

\begin{proposition}\label{prop:otw-ultrametric}
For every enumeration $\mathcal P$, the function $d_{\mathcal P}$ is an \emph{ultrametric} on $\full$.
\end{proposition}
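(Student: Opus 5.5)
The plan is to encode each point $x\in\full$ by its prefix-indicator sequence $\chi_x\in\{0,1\}^{\N}$, $\chi_x(j)=1$ iff $p_j\preceq x$. With this encoding $d_{\mathcal P}(x,y)=2^{-i}$, where $i$ is the first index at which $\chi_x$ and $\chi_y$ differ. So $d_{\mathcal P}$ is the pullback of the standard ultrametric $\rho(a,b)=2^{-\min\{j:a_j\neq b_j\}}$ on $\{0,1\}^{\N}$ along $x\mapsto\chi_x$. The argument then has two steps: show that this map is injective, and transfer the ultrametric inequality.

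\textbf{Well-definedness and injectivity.} First I will check that the minimum in Definition~\ref{def:otwmetric} exists whenever $x\neq y$, i.e.\ that $\chi_x\neq\chi_y$. Since $\mathcal P$ enumerates all of $\fin$, it suffices to exhibit one finite word $u$ that is a prefix of exactly one of $x,y$. I will split into cases.
\begin{enumerate}[label=(\roman*)]
\item Suppose $x$ and $y$ differ at some position $k\le\min(|x|,|y|)$. Then $u=x_1\cdots x_k$ works: it is a prefix of $x$ but not of $y$.
\item Otherwise one of them is a proper prefix of the other, say $|x|<|y|$ and $x\preceq y$. Then $x$ is itself a finite word (possibly $\emptyword$), and $u=y_1\cdots y_{|x|+1}$ is a prefix of $y$ but not of $x$, because $|x|<|x|+1$.
\end{enumerate}
This case analysis is the only point needing care. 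The main subtlety is the empty word, which is a prefix of everything, and the convention that a finite word has no prefixes longer than itself. Once injectivity holds, symmetry and $d_{\mathcal P}(x,y)=0\iff x=y$ follow immediately, and the values lie in $[0,1]$.

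\textbf{Strong triangle inequality.} I will prove $d_{\mathcal P}(x,z)\le\max\{d_{\mathcal P}(x,y),d_{\mathcal P}(y,z)\}$ directly, assuming the three points are distinct (otherwise it is trivial). Let $i$ and $j$ be the indices realizing $d_{\mathcal P}(x,y)$ and $d_{\mathcal P}(y,z)$, and set $m=\min(i,j)$. For every $l<m$ the indicator $\chi(l)$ agrees for $x$ and $y$, and also for $y$ and $z$, hence for $x$ and $z$. Therefore the first disagreement index of $x,z$ is at least $m$, which gives $d_{\mathcal P}(x,z)\le 2^{-m}=\max\{2^{-i},2^{-j}\}$.
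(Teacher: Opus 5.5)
Your proof is correct and follows the paper's argument: encode each point by its prefix indicator $\chi_x$ and obtain the strong triangle inequality from the fact that the first disagreement index of $x,z$ is at least $\min\{k(x,y),k(y,z)\}$. Your case analysis showing that $x\neq y$ implies $\chi_x\neq\chi_y$ (so the minimum in the definition exists and $d_{\mathcal P}$ separates points) is a step the paper uses without proof when it asserts $d_{\mathcal P}(x,y)=0$ iff $\chi_x=\chi_y$, so including it makes your version more complete.
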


\begin{proof}
Fix an enumeration $\mathcal P=\{p_1,p_2,\dots\}$ of $\fin$.
For $x\in\full$, define its \emph{prefix indicator} function
\[
\chi_x:\fin\to\{0,1\},\qquad
\chi_x(u)=
\begin{cases}
1,& u\preceq x,\\
0,& u\npreceq x.
\end{cases}
\]
Thus $d_{\mathcal P}(x,y)=0$ iff $\chi_x=\chi_y$, and otherwise
\[
d_{\mathcal P}(x,y)=2^{-k(x,y)},
\quad\text{where}\quad
k(x,y):=\min\{j\ge1:\chi_x(p_j)\neq \chi_y(p_j)\}.
\]

Let $x,y,z\in\full$. If two of them are equal, the ultrametric inequality is trivial, so assume
$x,y,z$ are arbitrary and consider the indices
$k(x,y),k(x,z),k(z,y)\in\N\cup\{\infty\}$ (with $k=\infty$ meaning equality).
Set
\[
r:=\min\{k(x,z),k(z,y)\}.
\]
By definition of $r$, we have
\[
\chi_x(p_j)=\chi_z(p_j)\quad\text{and}\quad \chi_z(p_j)=\chi_y(p_j)
\qquad\text{for all } j<r.
\]
Hence $\chi_x(p_j)=\chi_y(p_j)$ for all $j<r$, which implies
\[
k(x,y)\ \ge\ r.
\]
Therefore,
\[
d_{\mathcal P}(x,y)=2^{-k(x,y)}\ \le\ 2^{-r}
=\max\{2^{-k(x,z)},2^{-k(z,y)}\}
=\max\{d_{\mathcal P}(x,z),d_{\mathcal P}(z,y)\},
\]
which is precisely the ultrametric inequality. 
\end{proof}

\begin{remark}\label{prop:otw-topology}
For every enumeration $\mathcal P$, the metric $d_{\mathcal P}$ induces the OTW topology on $\full$
(see \cite[\S2.2--\S2.3]{OTW2014}).
\end{remark}

\subsection{OTW shift and Deaconu--Renault viewpoint}

The shift map $\sigma:\full\to\full$ was defined in \cite[Definition~2.22]{OTW2014} as:
\[
\sigma(x_1x_2x_3\cdots)=x_2x_3\cdots,\quad
\sigma(x_1\cdots x_k)=x_2\cdots x_k\ (k\ge 2),\quad
\sigma(x)=\emptyword\ (|x|\le 1).
\]
OTW prove that $\sigma$ is continuous at every point except $\emptyword$, and $\sigma$ restricts to a local homeomorphism on $\full\setminus\{\emptyword\}$ \cite[Proposition~2.23]{OTW2014}. Thus, up to the standard domain restriction, i.e. restriction to elements of length greater than or equal to 1, $(\full,\sigma)$ fits the Deaconu--Renault framework \cite[Definition~2.1]{GoncalvesUggioni2025Shadowing}. This framework of a Deaconu--Renault system is more friendly to C*-algebraic applications and it is used, for example, in \cite{BoavaCastroGoncalvesvanWyk2024, BoavaCastroGoncalvesvanWyk2025}.

\section{Shadowing for the OTW full shift}\label{OTWshadowing}

\subsection{All OTW metrics are uniformly equivalent}

\begin{lemma}
\label{lem:uniform}
If $\mathcal P$ and $\mathcal Q$ are two enumerations of $\fin$, then $d_{\mathcal P}$ and $d_{\mathcal Q}$ are uniformly equivalent.
\end{lemma}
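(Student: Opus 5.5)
By symmetry it suffices to show that the identity $(\full,d_{\mathcal P})\to(\full,d_{\mathcal Q})$ is uniformly continuous. The converse direction then follows by exchanging the roles of $\mathcal P$ and $\mathcal Q$. The plan is a direct finite-index argument that uses only the definition of the OTW metrics and the prefix indicators $\chi_x$ from the proof of Proposition~\ref{prop:otw-ultrametric}.

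Fix $\varepsilon>0$ and choose $N\ge1$ with $2^{-N}<\varepsilon$. Consider the finite set of words $\{q_1,\dots,q_N\}\subseteq\fin$. Each $q_j$ occurs somewhere in the enumeration $\mathcal P$, say $q_j=p_{m_j}$. Put $M:=\max\{m_1,\dots,m_N\}$, which is finite because we are taking the maximum of finitely many indices, and set $\delta:=2^{-M}$.

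Now suppose $d_{\mathcal P}(x,y)<\delta$. Then either $x=y$, or $k_{\mathcal P}(x,y)>M$. In either case $\chi_x(p_i)=\chi_y(p_i)$ for all $i\le M$. In particular $\chi_x(q_j)=\chi_y(q_j)$ for $j=1,\dots,N$. It follows that either $x=y$, or the first index at which $\chi_x$ and $\chi_y$ differ along $\mathcal Q$ exceeds $N$, so that $d_{\mathcal Q}(x,y)\le 2^{-(N+1)}<\varepsilon$. Since $\delta$ depends only on $\varepsilon$ and not on $x,y$, this gives uniform continuity.

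There is no real obstacle. The only point to watch is that the passage from $\varepsilon$ to $\delta$ depends only on reindexing finitely many words, so no compactness argument is needed. Alternatively, one could cite Remark~\ref{prop:otw-topology}: both metrics induce the same compact topology, so the identity is a continuous map on a compact metric space and hence uniformly continuous. I would present the direct argument and mention the compactness argument as a remark.
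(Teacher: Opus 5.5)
Your proof is correct, but it takes a different route from the paper. The paper argues topologically. Both $d_{\mathcal P}$ and $d_{\mathcal Q}$ induce the OTW topology (cited from OTW, Remark~\ref{prop:otw-topology}), and this topology is compact. So the identity is a continuous map on a compact metric space, and Heine--Cantor gives uniform continuity in both directions.

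You instead work directly with the prefix indicators. The first $N$ tests $q_1,\dots,q_N$ of $\mathcal Q$ sit among the first $M$ tests of $\mathcal P$, so $d_{\mathcal P}(x,y)<2^{-M}$ forces $d_{\mathcal Q}(x,y)\le 2^{-(N+1)}$.

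What your version buys:
\begin{itemize}
\item It is self-contained. It uses neither compactness nor the identification of the topology induced by $d_{\mathcal P}$; in fact it reproves that all OTW metrics induce one and the same topology. It only uses that both metrics are pullbacks, via $x\mapsto\chi_x$, of product metrics on $\{0,1\}^{\fin}$ that differ by a permutation of coordinates.
\item It gives an explicit modulus $\varepsilon=2^{-N}\mapsto\delta=2^{-M(N)}$, which shows exactly where bi-Lipschitz equivalence can be lost. If $M(N)\le N+c$ for all $N$ in both directions, your argument upgrades to a bi-Lipschitz bound. For the enumeration $\mathcal Q$ of Lemma~\ref{lem:bad-enum} compared with a prefix--shift-compatible one, this condition must fail in at least one direction. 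Indeed, by Theorem~\ref{thm:lipschitz-fails}, Proposition~\ref{prop:lipschitz-holds} and Proposition~\ref{prop:lipschitz-bilip}, those two metrics are not bi-Lipschitz equivalent.
\end{itemize}

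The paper's proof, in exchange, is shorter given the cited OTW results. It is also the generic argument valid for any two compatible metrics on a compact space.
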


\begin{proof}
By OTW, both metrics induce the same compact Hausdorff topology on $\full$.
Hence $\mathrm{id}:(\full,d_{\mathcal P})\to(\full,d_{\mathcal Q})$ is continuous.
Since $(\full,d_{\mathcal P})$ is compact, Heine--Cantor implies $\mathrm{id}$ is uniformly continuous.
Symmetrically, $\mathrm{id}:(\full,d_{\mathcal Q})\to(\full,d_{\mathcal P})$ is uniformly continuous.
\end{proof}

\subsection{Shadowing for the OTW full shift}

\begin{proposition}
\label{prop:shadowingfull}
Let $\A$ be countably infinite. For \emph{every} OTW metric $d_{\mathcal P}$, the system $(\full,\sigma,d_{\mathcal P})$ has the shadowing property.
\end{proposition}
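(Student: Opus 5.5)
By Lemma~\ref{lem:uniform} and Proposition~\ref{prop:shadowing-uniform}, it suffices to prove shadowing for a single OTW metric $d_{\mathcal P}$. The plan is to do this either by invoking a known shadowing criterion for Deaconu--Renault systems on zero-dimensional compact spaces from \cite{GoncalvesUggioni2025Shadowing}, or, if that criterion is awkward to verify, by a direct combinatorial argument.

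The direct argument starts from the structure of small balls. Fix $\varepsilon=2^{-N}$. Then $d_{\mathcal P}(x,y)<\varepsilon$ means exactly that $x$ and $y$ agree on the prefix tests for $p_1,\dots,p_N$. Consequently the open $\varepsilon$-balls are the atoms of a finite clopen partition $\mathcal C_N$ of $\full$. Each atom is a generalized cylinder $Z(u,F)$ with $u$ the longest word among $p_1,\dots,p_N$ that is a prefix of the points in it, and $F$ the finite set of letters $a$ with $ua$ among the excluded tests, or the singleton $\{u\}$ when $u$ itself is isolated by the tests.

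Next I would choose $\delta=2^{-M}$ with $M\ge N$ large enough that $\{p_1,\dots,p_M\}$ contains every word $aw$ with $a$ a letter and $w$ appearing among $p_1,\dots,p_N$ (or $w=\emptyword$), and also every $a$ occurring in these. With this choice, the partition $\mathcal C_M$ refines $\sigma^{-1}(\mathcal C_N)$ on $Dom(\sigma)$. So if $d(\sigma(x^n),x^{n+1})<\delta$, then $\sigma(x^n)$ and $x^{n+1}$ lie in the same $\mathcal C_M$-atom, and in particular in the same $\mathcal C_N$-atom.

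Given a $\delta$-pseudo-orbit $(x^n)$, I would build the shadowing point symbol by symbol: $z=x^0_1x^1_1x^2_1\cdots$. If some $x^n$ has small length, the construction is truncated accordingly and $z$ becomes a finite word. The key check is that $\sigma^n(z)$ lies in the same $\mathcal C_N$-atom as $x^n$. Prefix tests on words of length at most $K$, where $K$ bounds the lengths of $p_1,\dots,p_N$, are determined by the first $K$ symbols. Chaining the overlaps $x^n_{t+1}=x^{n+1}_t$ for $t<K$, which the choice of $M$ forces, gives $\sigma^n(z)$ and $x^n$ equal first $K$ symbols, together with the same length when that length is less than $K$.

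The main obstacle is the treatment of finite words and the domain restriction, since the pseudo-orbit must stay in $Dom(\sigma)=\full\setminus\{\emptyword\}$. I would handle it as follows. If some $x^n$ has length $\ell\le K$, the overlaps force the tail of $z$ to terminate exactly there. The requirement $x^n\in Dom(\sigma)$ for all $n$, together with closeness, then prevents the pseudo-orbit from needing $\sigma$ applied to $\emptyword$, except in edge cases. Those remaining edge cases are resolved by taking $\delta$ smaller than the distance from $\emptyword$ to the clopen set of words of length $\ge1$ with a fixed bounded prefix structure.

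If this bookkeeping becomes unwieldy, I would instead cite the theorem of \cite{GoncalvesUggioni2025Shadowing} characterizing shadowing for Deaconu--Renault systems on compact zero-dimensional spaces. That route verifies the same finite-partition refinement property $\mathcal C_M\preceq\sigma^{-1}\mathcal C_N$ established above.
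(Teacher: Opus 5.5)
Your reduction to a single enumeration via Lemma~\ref{lem:uniform} and Proposition~\ref{prop:shadowing-uniform} is exactly the paper's. The paper then simply cites \cite[Proposition~5.1]{GoncalvesUggioni2025Shadowing}, which proves shadowing for the Deaconu--Renault system of the rose with infinitely many petals; its boundary path space is $\full$ carrying a particular OTW metric $d_{\mathcal P_0}$.

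Your direct argument, however, relies on a product-topology mechanism that is false for OTW metrics. The condition $d_{\mathcal P}(y,y')<2^{-M}$ does \emph{not} force $y$ and $y'$ to share even their first symbol: any two points whose first letters occur in none of $p_1,\dots,p_M$ are $2^{-M}$-close. This is exactly what drives Theorem~\ref{thm:lipschitz-fails}. Three of your steps fail as a result.
\begin{enumerate}
\item Your choice of $M$ is impossible, because the family $\{aw: a\in\A\}$ is infinite.
\item The refinement $\mathcal C_M\preceq\sigma^{-1}(\mathcal C_N)$ fails for every finite $M$ once $\mathcal C_N$ is nontrivial. Take $aw$ and $bw'$ with letters $a,b$ not occurring in $p_1,\dots,p_M$, and $w,w'$ in different $\mathcal C_N$-atoms. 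Then $aw,bw'$ lie in one $\mathcal C_M$-atom while $\sigma(aw)=w$ and $\sigma(bw')=w'$ do not; $\sigma$ is not uniformly continuous on the non-compact set $\full\setminus\{\emptyword\}$.
\item The overlaps $x^n_{t+1}=x^{n+1}_t$ are not forced, so $\sigma^n(z)$ and $x^n$ need not share their first $K$ symbols.
\end{enumerate}
Your fallback citation is to be verified through the same refinement property, so it does not rescue the argument.

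The finite-word discussion is also off. Every $x^n$ lies in $Dom(\sigma)$ and so is nonempty, hence $z=x^0_1x^1_1x^2_1\cdots$ is always an infinite word. It must be infinite anyway, since a shadowing point has to lie in $Dom(\sigma^i)$ for all $i$; truncating $z$ to a finite word would disqualify it.

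Your choice of $z$ is nevertheless the right one. What must change is the bookkeeping: track outcomes of finitely many prefix tests rather than initial symbols. Let $G$ be the set of all subwords of $p_1,\dots,p_N$, a finite set closed under prefixes and under $\sigma$. Choose $M\ge N$ with $G\subseteq\{p_1,\dots,p_M\}$ and put $\delta=2^{-M}$. Then prove by induction on $|u|$ that $u\preceq\sigma^n(z)\iff u\preceq x^n$ for all $u\in G$ and $n\ge0$. The cases $|u|\le1$ are immediate from $z_{n+1}=x^n_1$. For $u=cv$ with $|u|\ge2$,
\[
u\preceq\sigma^n(z)\iff \bigl(x^n_1=c \text{ and } v\preceq x^{n+1}\bigr)\iff \bigl(x^n_1=c \text{ and } v\preceq\sigma(x^n)\bigr)\iff u\preceq x^n .
\]
The first step uses the induction hypothesis for $v\in G$. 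The second uses that $\sigma(x^n)$ and $x^{n+1}$ agree on the test $v\in\{p_1,\dots,p_M\}$. Finite words are handled automatically: if $|x^n|=1$ then $\sigma(x^n)=\emptyword$ and $v\not\preceq\emptyword$.

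This is the argument of Proposition~\ref{prop:lipschitz-holds} with $\{p_1,\dots,p_N\}$ replaced by its closure $G$. It gives $d_{\mathcal P}(\sigma^n(z),x^n)<2^{-N}$ directly for every enumeration, so neither the uniform-equivalence transfer nor the external citation is needed. A bad enumeration only forces $M$ to be much larger than $N$, consistent with Theorem~\ref{thm:lipschitz-fails}.
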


\begin{proof}
\medskip

Throughout this proof we interpret pseudo-orbits and shadowing in the Deaconu--Renault sense for the
local homeomorphism $\sigma\colon \full\setminus\{\emptyword\}\to\full$, so all iterates
$\sigma^n(x)$ that appear are understood to be defined. With this convention, the shadowing results of \cite{GoncalvesUggioni2025Shadowing} apply directly.

In \cite[Proposition~5.1]{GoncalvesUggioni2025Shadowing}, the authors prove shadowing for the Deaconu--Renault system associated to the \emph{rose with infinitely many petals} (one vertex, countably many loops), endowed with the natural ultrametric built from an edge-ordered enumeration (see \cite[Example~3.9]{GoncalvesUggioni2025Shadowing}).
The associated boundary path space identifies canonically with the OTW full shift $\full$ (finite paths correspond to finite words, and infinite paths correspond to infinite sequences), and the metric in \cite{GoncalvesUggioni2025Shadowing} agrees with an OTW metric $d_{\mathcal P_0}$ for a particular enumeration $\mathcal P_0$.

Hence $(\full,\sigma,d_{\mathcal P_0})$ has shadowing.
Now let $\mathcal P$ be any enumeration. By Lemma~\ref{lem:uniform}, $d_{\mathcal P}$ and $d_{\mathcal P_0}$ are uniformly equivalent.
By Proposition~\ref{prop:shadowing-uniform}, shadowing transfers from $d_{\mathcal P_0}$ to $d_{\mathcal P}$.
\end{proof}

\section{Lipschitz shadowing for OTW-shift spaces}\label{OTW lip shad}

We now construct an enumeration $\mathcal Q$ such that the induced OTW metric yields failure of Lipschitz shadowing, even though shadowing holds by Proposition~\ref{prop:shadowingfull}. We also construct an enumeration where the Lipschitz shadowing property holds.

\subsection{Non-Lipschitz shadowing enumeration}

We begin with the definition of the properties our desired enumeration should satisfy.

\begin{lemma}\label{lem:bad-enum}
There exist an increasing sequence $i_1<i_2<\cdots$ in $\N$ and an enumeration
$\mathcal Q=\{q_k\}_{k\ge 1}$ of $\fin$ with the following property:
for each $n\ge 1$ there exist \emph{pairwise distinct} letters
$a^{(n)}_1,a^{(n)}_2,a^{(n)}_3,b^{(n)}\in\A$ such that
\begin{enumerate}[label=(\roman*),leftmargin=2.2em]
\item $q_{i_n-n}=a^{(n)}_1a^{(n)}_2a^{(n)}_3$;
\item no word among $q_1,\dots,q_{i_n}$ starts with $a^{(n)}_2$;
\item the letter $b^{(n)}$ does not appear in \emph{any} word among $q_1,\dots,q_{i_n}$.
\end{enumerate}
\end{lemma}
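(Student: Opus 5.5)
We build $\mathcal Q$ by an inductive bookkeeping construction in stages $n=1,2,\dots$. At the end of stage $n$ we will have fixed a finite initial segment $q_1,\dots,q_{i_n}$ of distinct words. To guarantee that every word of $\fin$ eventually appears, fix beforehand an enumeration $w_1,w_2,\dots$ of $\fin$ (for instance $\emptyword$ first). Stage $n$ will add the first word $w_m$ not yet listed, the "catch-up" step. Since $\A$ is infinite and only finitely many letters occur in the finitely many words listed so far, fresh letters are always available. This is the key point throughout.

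\emph{Stage $n$.} Suppose $q_1,\dots,q_{i_{n-1}}$ are fixed, with $i_0=0$.

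1. Let $S$ be the finite set of letters appearing in $q_1,\dots,q_{i_{n-1}}$ together with the letters of the catch-up word $w_m$. Choose four pairwise distinct letters $a^{(n)}_1,a^{(n)}_2,a^{(n)}_3,b^{(n)}\in\A\setminus S$.

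2. Append $w_m$ (if it is not already listed) as the next term or terms. Then append $a^{(n)}_1a^{(n)}_2a^{(n)}_3$ at some index $j$; this word is new because it uses letters outside $S$.

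3. Append $n$ further new words, each built only from letters in $S\cup\{a^{(n)}_1\}$. For example, take $(a^{(n)}_1)^{\ell}$ for $\ell=4,\dots,n+3$; these are distinct and none has been listed before, since $a^{(n)}_1\notin S$.

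4. Set $i_n:=j+n$, so that $q_{i_n-n}=q_j=a^{(n)}_1a^{(n)}_2a^{(n)}_3$ and (i) holds. We have $i_n>i_{n-1}$.

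\emph{Checking (ii) and (iii).} Every word among $q_1,\dots,q_{i_n}$ is one of the following:
\begin{itemize}
\item an earlier word or the catch-up word, hence written with letters in $S$;
\item the word $a^{(n)}_1a^{(n)}_2a^{(n)}_3$, which starts with $a^{(n)}_1$;
\item one of the padding words, written only with $a^{(n)}_1$.
\end{itemize}
So no word starts with $a^{(n)}_2$, which gives (ii). The letter $b^{(n)}$ occurs in none of them, which gives (iii). The four letters are pairwise distinct by choice.

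\emph{Bijectivity.} The listed words are distinct by construction. Every $w_m$ is listed by stage $m$ at the latest, because each stage lists at least the first unlisted $w$. Hence $\mathcal Q$ is an enumeration of $\fin$.

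The only real care point is the simultaneous requirement of (ii), (iii) and surjectivity. The catch-up words could a priori contain future $a$'s or $b$'s, but this is harmless: the letters for stage $n$ are chosen after the catch-up word for stage $n$ is known. Later stages choose fresh letters avoiding everything listed earlier, so conditions (ii) and (iii) at stage $n$ only concern the finite prefix $q_1,\dots,q_{i_n}$ and are never spoiled.
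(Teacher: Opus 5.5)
Your construction is correct. It shares the paper's stagewise template: fresh letters at each stage, the special word $a^{(n)}_1a^{(n)}_2a^{(n)}_3$ placed at index $i_n-n$, then $n$ more slots. Where you differ is in how surjectivity is secured.

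The paper fixes the four letters first. It then fills every other slot of $(i_{n-1},i_n]$ greedily with the earliest unused words of a reference list that are eligible at stage $n$, meaning they do not start with $a^{(n)}_2$ and do not contain $b^{(n)}$. It must then argue two things: each word is eligible at all but finitely many stages (using pairwise disjointness of the sets $\{a^{(n)}_2,b^{(n)}\}$), and infinitely many filler slots remain.

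You reverse the order. The catch-up word is fixed before the letters are chosen, so the letters simply avoid it. The padding words $(a^{(n)}_1)^\ell$ are fresh and satisfy (ii) and (iii) automatically. Surjectivity then becomes a one-line induction: each $w_m$ is listed by stage $m$.

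Your route is also more robust. In the paper's version, choosing the letters merely outside $S_{n-1}$ does not by itself give the claimed disjointness. By (iii), $b^{(n-1)}\notin S_{n-1}$, so $b^{(n)}=b^{(n-1)}$ is not excluded. One must additionally require new letters to differ from all previously chosen ones, and your argument never needs this. The paper's greedy filling buys nothing essential beyond keeping the enumeration close to the reference list, which is not used later.
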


\begin{proof}
Fix once and for all a listing
\[
\fin=\{w_1,w_2,w_3,\dots\}.
\]
We construct inductively an increasing sequence $(i_n)_{n\ge1}$ and the values of an enumeration
$\mathcal Q=\{q_k\}_{k\ge1}$.

\medskip\noindent
\textbf{Step $n=1$ (initial step).}
Choose pairwise distinct letters
\[
a^{(1)}_1,\ a^{(1)}_2,\ a^{(1)}_3,\ b^{(1)}\in\A .
\]
Choose an integer $i_1\ge2$ (so that $i_1-1>0=i_0$) and define
\[
q_{\,i_1-1}:=a^{(1)}_1a^{(1)}_2a^{(1)}_3.
\]
For every other index $k\in (0,i_1]\setminus\{i_1-1\}$ (i.e.\ $1\le k\le i_1$ and $k\neq i_1-1$),
define $q_k$ by taking, among the words $w_j$ not used yet, the first ones (in increasing order of $j$)
satisfying:
\begin{itemize}
\item $w_j$ does not start with $a^{(1)}_2$, and
\item $b^{(1)}$ does not occur in $w_j$,
\end{itemize}
until all such indices are filled.

This is possible because there are infinitely many words in $\fin$ satisfying these two restrictions, and we need to fill only finitely
many slots.
By construction, items (i)--(iii) of the lemma hold for $n=1$.

\medskip\noindent
\textbf{Induction hypothesis.}
Assume that for some $n\ge2$ we have already chosen indices
\[
i_1<i_2<\cdots<i_{n-1},
\]
and defined words $q_1,\dots,q_{i_{n-1}}$ (all distinct), such that for each $r=1,\dots,n-1$ there exist
pairwise distinct letters $a^{(r)}_1,a^{(r)}_2,a^{(r)}_3,b^{(r)}\in\A$ for which conditions (i)--(iii)
hold at stage $r$ (i.e.\ up to the cutoff $i_r$).

\medskip\noindent
\textbf{Inductive step (construct stage $n$).}
Let
\[
S_{n-1}:=\{\text{letters that occur in some }q_k,\ 1\le k\le i_{n-1}\}.
\]
Since only finitely many words have been defined so far, $S_{n-1}$ is finite. Choose pairwise distinct letters
\[
a^{(n)}_1,\ a^{(n)}_2,\ a^{(n)}_3,\ b^{(n)}\in \A\setminus S_{n-1}.
\]
Now choose an integer $i_n$ so large that
\[
i_n-n>i_{n-1}.
\]
Define the special slot
\[
q_{\,i_n-n}:=a^{(n)}_1a^{(n)}_2a^{(n)}_3.
\]
This word is new, since the letters $a^{(n)}_1,a^{(n)}_2,a^{(n)}_3$ were chosen outside $S_{n-1}$.

For every other index
\[
k\in (i_{n-1},i_n]\setminus\{i_n-n\},
\]
define $q_k$ by taking, among the words $w_j$ not used yet, the first ones (in increasing order of $j$)
satisfying:
\begin{itemize}
\item $w_j$ does not start with $a^{(n)}_2$, and
\item $b^{(n)}$ does not occur in $w_j$,
\end{itemize}
until all such indices are filled.

This is possible because there are infinitely many words in $\fin$ satisfying these two restrictions
(e.g.\ one-letter words $(c)$ with $c\in\A\setminus\{a^{(n)}_2,b^{(n)}\}$), and only finitely many
words have been used so far, so infinitely many eligible words remain unused.

With this construction, items (ii) and (iii) hold at stage $n$:
\begin{itemize}
\item[(ii)] no word among $q_1,\dots,q_{i_n}$ starts with $a^{(n)}_2$, since $a^{(n)}_2\notin S_{n-1}$ and all newly assigned words were chosen not to start with $a^{(n)}_2$;
\item[(iii)] the letter $b^{(n)}$ does not appear in any word among $q_1,\dots,q_{i_n}$, since $b^{(n)}\notin S_{n-1}$ and all newly assigned words were chosen to avoid $b^{(n)}$.
\end{itemize}
This completes the inductive construction.

\medskip\noindent
\textbf{Why $\mathcal Q$ enumerates $\fin$.}
We first note that by construction all $q_k$ are distinct, and every $q_k$ is some $w_j$, so $\{q_k\}\subseteq\fin$.
It remains to show that every $w_j$ appears.

For each $n$, set $B_n:=\{a^{(n)}_2,b^{(n)}\}$. Since at each stage the letters are chosen fresh,
the sets $B_n$ are pairwise disjoint. Let $w\in\fin$, and let $\mathcal L(w)\subseteq \A$ be the finite set of letters
occurring in $w$. Because the $B_n$ are disjoint and $\mathcal L(w)$ is finite, there are only finitely many $n$ with
$\mathcal L(w)\cap B_n\neq\varnothing$. Hence there exists $N(w)$ such that for every $n\ge N(w)$:
\begin{itemize}
\item $w$ does not start with $a^{(n)}_2$, and
\item $b^{(n)}$ does not occur in $w$.
\end{itemize}
Thus $w$ is \emph{eligible} to be used as a filler word at every stage $n\ge N(w)$.

Now fix $j\ge1$ and consider $w_j$. Let
\[
N_j:=N(w_j).
\]
For every stage $n\ge N_j$, the word $w_j$ is eligible. Moreover, our choice $i_n-n>i_{n-1}$ implies
$i_n-i_{n-1}>n$, hence the number of filler slots at stage $n$ is
\[
\#\Big((i_{n-1},i_n]\setminus\{i_n-n\}\Big)=i_n-i_{n-1}-1\ \ge\ n.
\]
Therefore the total number of filler slots available from stage $N_j$ onward is infinite. Since at each stage we fill
filler slots with the \emph{earliest} eligible unused words in the fixed list $\{w_1,w_2,\dots\}$, it follows that once
all eligible words $w_1,\dots,w_{j-1}$ have been used, the word $w_j$ becomes the earliest eligible unused word and must be
selected at some later filler slot. Hence $w_j$ is eventually chosen.

Since $j$ was arbitrary, every $w_j$ appears exactly once among the $q_k$. Thus $\mathcal Q$ is an enumeration of $\fin$,
and the lemma follows.
\end{proof}

\subsection{Failure of Lipschitz shadowing}

\begin{theorem}\label{thm:lipschitz-fails}
Let $\mathcal Q$ be as in Lemma~\ref{lem:bad-enum}, and let $d_{\mathcal Q}$ be the associated OTW metric.
Then $(\full,\sigma,d_{\mathcal Q})$ does \emph{not} have the Lipschitz shadowing property.
\end{theorem}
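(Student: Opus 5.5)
The plan is to argue by contradiction. Suppose $(\full,\sigma,d_{\mathcal Q})$ had Lipschitz shadowing with constants $L>0$ and $\delta_0>0$. For each $n$ I will build a single pseudo-orbit, using the letters $a^{(n)}_1,a^{(n)}_2,a^{(n)}_3,b^{(n)}$ from Lemma~\ref{lem:bad-enum}, with two features:
\begin{enumerate}[label=(\alph*)]
\item it is a $\delta_n$-pseudo-orbit for $\delta_n:=2^{-i_n}$;
\item any point that $\varepsilon$-shadows it must satisfy $\varepsilon>2^{-(i_n-n)}$.
\end{enumerate}
The gap factor $2^{n}$ between these two scales is unbounded. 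So once $n$ is large enough that $2^{n}>L$ and $\delta_n\le\delta_0$, the $L\delta_n$-shadowing point promised by Lipschitz shadowing cannot exist, which gives the contradiction.

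\textbf{Key tool.} I first record the consequence of the definition of $d_{\mathcal Q}$ that drives everything: $d_{\mathcal Q}(x,y)<2^{-m}$ if and only if $\chi_x(q_j)=\chi_y(q_j)$ for all $j\le m$.

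\textbf{Step 1: the pseudo-orbit is fine.} Fix a letter $c\notin\{a^{(n)}_1,a^{(n)}_2,a^{(n)}_3,b^{(n)}\}$. I would take
\[
x^0=a^{(n)}_1a^{(n)}_2a^{(n)}_3c\,c\,c\cdots,\qquad x^1=a^{(n)}_2b^{(n)}c\,c\cdots,\qquad x^{m+1}=\sigma(x^m)\ (m\ge1).
\]
The only non-exact jump is at time $0$, where $\sigma(x^0)=a^{(n)}_2a^{(n)}_3c\cdots$ is compared with $x^1=a^{(n)}_2b^{(n)}c\cdots$. Both points start with $a^{(n)}_2$. A nonempty word that is a prefix of either point must begin with $a^{(n)}_2$, and by Lemma~\ref{lem:bad-enum}(ii) no such word lies among $q_1,\dots,q_{i_n}$. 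The empty word is a prefix of both. Hence the indicators of the two points agree on $q_1,\dots,q_{i_n}$, so $d_{\mathcal Q}(\sigma(x^0),x^1)<2^{-i_n}$ and (a) holds.

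\textbf{Step 2: shadowing is coarse.} Suppose $z$ $\varepsilon$-shadows this pseudo-orbit with $\varepsilon\le 2^{-(i_n-n)}$. Since $q_{i_n-n}=a^{(n)}_1a^{(n)}_2a^{(n)}_3$ is a prefix of $x^0$, Lemma~\ref{lem:bad-enum}(i) and the key tool force $a^{(n)}_1a^{(n)}_2a^{(n)}_3\preceq z$. Consequently $\sigma(z)$ begins with $a^{(n)}_2a^{(n)}_3$, whereas $x^1$ begins with $a^{(n)}_2b^{(n)}$. The remaining task is to show that this mismatch is detected by some $q_j$ with $j\le i_n-n$. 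I would try to use Lemma~\ref{lem:bad-enum}(iii) here: because $b^{(n)}$ occurs in no word among $q_1,\dots,q_{i_n}$, the point $x^1$, and likewise $x^2=b^{(n)}c\cdots$, has very few prefixes among the early words. The aim is to locate an early word $q_j$ that is a prefix of one of $\sigma(z),x^1$ or of $\sigma^2(z),x^2$ but not of the other.

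\textbf{Main obstacle.} Step 2 is the step I expect to be hardest, and it may require adjusting the construction. The issue is that conditions (i)--(iii) make $x^1$ and $x^2$ invisible to the early words, but they do not by themselves guarantee that $\sigma(z)$ or $\sigma^2(z)$ is \emph{seen} by an early word. If the naive pseudo-orbit above does not produce such a witness, I would try two remedies:
\begin{itemize}
\item replace the tails by finite words, e.g.\ $x^1=a^{(n)}_2b^{(n)}$, so that the later iterates are short words whose indicator functions are easy to compute;
\item modify Lemma~\ref{lem:bad-enum} so that the one-letter word $a^{(n)}_3$, or the two-letter word $a^{(n)}_2a^{(n)}_3$, is also placed at an index at most $i_n-n$, which immediately separates $\sigma(z)$ or $\sigma^2(z)$ from the pseudo-orbit at scale $2^{-(i_n-n)}$.
\end{itemize}
In either variant the conclusion is the same: $L\delta_n\ge 2^{-(i_n-n)}$ for all large $n$, i.e.\ $L\ge 2^{n}$ for all large $n$, which is impossible.
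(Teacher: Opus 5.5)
\textbf{There is a genuine gap, in Step 2, and it cannot be closed for the pseudo-orbit you chose.}

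Your contradiction framework, the key tool, and Step 1 are all correct. But property (b) does not follow from (i)--(iii), and it is false in general. Write $a_i=a^{(n)}_i$ and $b=b^{(n)}$, and test the candidate $z=x^0$:
\begin{itemize}
\item For $m=0$ and for all $m\ge3$, $\sigma^m(x^0)=x^m$; for $m\ge 3$ both equal $ccc\cdots$.
\item For $m=1$, the error is $<2^{-i_n}$ by your own Step 1.
\item For $m=2$, you compare $a_3ccc\cdots$ with $bccc\cdots$. By (iii), $bccc\cdots$ has no nonempty prefix among $q_1,\dots,q_{i_n}$. So the discrepancy is detected before index $i_n$ only if some word $a_3c^k$ ($k\ge0$) occurs among $q_1,\dots,q_{i_n}$.
\end{itemize}
Nothing in Lemma~\ref{lem:bad-enum} forces such a word to be early. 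The lemma's construction also permits choices of the fresh letter $a^{(n)}_3$ for which none occurs, for instance by choosing it outside the letters of the stage-$n$ filler words. In that case $x^0$ shadows your pseudo-orbit with error $<\delta_n$, that is, with constant $1$.

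The structural reason is this: after one invisible jump, your pseudo-orbit is an honest orbit. It never again asks for the single visible word $w=q_{i_n-n}$, and every remaining discrepancy is invisible at scale $2^{-(i_n-n)}$.

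Your two remedies do not repair this.
\begin{itemize}
\item Using finite words leaves the same time-$2$ comparison. In addition, orbits of finite words reach $\emptyword\notin Dom(\sigma)$.
\item Placing the one-letter word $(a_3)$ at an index $\le i_n-n$ does work. However, it proves the result for a modified enumeration, not for $\mathcal Q$ as in Lemma~\ref{lem:bad-enum}. Placing $a_2a_3$ early instead would contradict (ii).
\end{itemize}

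\textbf{The missing idea is to make the pseudo-orbit demand $w$ again two steps later.} Then the one visible word suffices, through its self-overlap obstruction. The paper uses the $2$-periodic pseudo-orbit $x^{2k}=a_1a_2a_3bbb\cdots$, $x^{2k+1}=b\,a_1a_2a_3bbb\cdots$.
\begin{itemize}
\item The step $x^{2k+1}\to x^{2k+2}$ is exact.
\item The comparison of $\sigma(x^{2k})=a_2a_3bbb\cdots$ with $x^{2k+1}$ is invisible to $q_1,\dots,q_{i_n}$ by (ii) and (iii). So this is a $2^{-i_n}$-pseudo-orbit.
\item A point $z$ shadowing it at scale below $2^{-(i_n-n)}$ must, by (i) alone, satisfy $w\preceq z$ and $w\preceq\sigma^2(z)$.
\item That means $z_1z_2z_3=a_1a_2a_3=z_3z_4z_5$, so $a_3=z_3=a_1$, contradicting distinctness.
\end{itemize}
No second early witness word is needed.

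Within your own setup, taking $x^1=a_2a_1a_2a_3ccc\cdots$ would have the same effect. The jump at time $0$ stays invisible by (ii), and $x^2$ begins with $w$ again.
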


\begin{proof}
We prove the negation of Lipschitz shadowing: for every $L>0$ and every $\delta_0>0$,
there exist $\delta\in(0,\delta_0]$ and a $\delta$-pseudo-orbit in $\full$ that is not
$L\delta$-shadowed by any point of $\full$.

Fix $L>0$ and $\delta_0>0$.
Choose $n\ge1$ such that $2^n>L$.
By Lemma~\ref{lem:bad-enum}, choose the corresponding index $i_n$ and letters
\[
a_1:=a^{(n)}_1,\qquad a_2:=a^{(n)}_2,\qquad a_3:=a^{(n)}_3,\qquad b:=b^{(n)}.
\]
By increasing $n$ further if necessary, we may assume
\[
2^{-i_n}<\delta_0.
\]
Set
\[
\delta:=2^{-i_n},
\qquad
w:=a_1a_2a_3=q_{i_n-n}.
\]

\medskip\noindent
\textbf{Step 1: construct a $\delta$-pseudo-orbit.}
Define $\{x^m\}_{m\ge0}\subseteq \A^{\N}\subseteq\full$ by
\[
x^{2k}:=a_1a_2a_3\, bbb\cdots,
\qquad
x^{2k+1}:=b\,a_1a_2a_3\, bbb\cdots
\quad (k\ge0).
\]

We claim that $\{x^m\}$ is a $\delta$-pseudo-orbit.

For odd $m=2k+1$,
\[
\sigma(x^{2k+1})=a_1a_2a_3\, bbb\cdots = x^{2k+2},
\]
so
\[
d_{\mathcal Q}(\sigma(x^{2k+1}),x^{2k+2})=0<\delta.
\]

For even $m=2k$,
\[
\sigma(x^{2k})=a_2a_3\, bbb\cdots,
\qquad
x^{2k+1}=b\,a_1a_2a_3\, bbb\cdots.
\]
Any nonempty prefix of $\sigma(x^{2k})$ starts with $a_2$, while any nonempty prefix of $x^{2k+1}$
starts with $b$.

By Lemma~\ref{lem:bad-enum}(ii), no word among $q_1,\dots,q_{i_n}$ starts with $a_2$.
By Lemma~\ref{lem:bad-enum}(iii), the letter $b$ does not occur in any word among $q_1,\dots,q_{i_n}$,
hence in particular no such word starts with $b$.
Therefore, for every $r\le i_n$, the word $q_r$ is a prefix of $\sigma(x^{2k})$ if and only if it is a
prefix of $x^{2k+1}$ (indeed, both are false unless $q_r=\emptyword$, in which case both are true).
So the first distinguishing index is $>i_n$, and hence
\[
d_{\mathcal Q}(\sigma(x^{2k}),x^{2k+1})<2^{-i_n}=\delta.
\]

Thus $\{x^m\}_{m\ge0}$ is a $\delta$-pseudo-orbit.

\medskip\noindent
\textbf{Step 2: no point can $L\delta$-shadow it.}
Assume, toward a contradiction, that there exists $x\in\full$ such that
\[
d_{\mathcal Q}(\sigma^m(x),x^m)\le L\delta
\qquad\text{for all }m\ge0.
\]
Since $2^n>L$, we have
\[
L\delta < 2^n\delta = 2^{-(i_n-n)}.
\]
Hence, for every $m\ge0$,
\[
d_{\mathcal Q}(\sigma^m(x),x^m) < 2^{-(i_n-n)}.
\]
By definition of the OTW metric and since $q_{i_n-n}=w$, this implies
\[
w\preceq \sigma^m(x) \iff w\preceq x^m
\qquad\text{for every }m\ge0.
\]

In particular, for every $k\ge0$, the point $x^{2k}$ begins with $w$, so
\[
w\preceq \sigma^{2k}(x)\qquad\text{for all }k\ge0.
\]
Taking $k=0$ and $k=1$, we get
\[
w\preceq x
\qquad\text{and}\qquad
w\preceq \sigma^2(x).
\]
Writing $w=a_1a_2a_3$, the first condition says that the first three letters of $x$ are
\[
x_1x_2x_3=a_1a_2a_3,
\]
while the second says that the first three letters of $\sigma^2(x)$ are also $a_1a_2a_3$, i.e.
\[
x_3x_4x_5=a_1a_2a_3.
\]
Comparing the first letters of these two blocks gives
\[
x_3=a_3 \quad\text{and}\quad x_3=a_1,
\]
hence $a_1=a_3$, contradicting the fact that $a_1$ and $a_3$ are distinct.

Therefore no point of $\full$ can $L\delta$-shadow the pseudo-orbit $\{x^m\}$.
Since $L>0$ and $\delta_0>0$ were arbitrary, $(\full,\sigma,d_{\mathcal Q})$ does not have the
Lipschitz shadowing property.
\end{proof}
\subsection{An enumeration inducing Lipschitz shadowing}
Let $\mathcal P=\{p_1,p_2,\dots\}$ be an enumeration of $\fin$.
Given $i\in\N$, we write
\[
a \not\preceq \{p_1,\dots,p_i\}
\quad\text{if}\quad
a \text{ does not occur in any of the words } p_1,\dots,p_i .
\]

\begin{definition}[Prefix--shift compatible enumeration]\label{def:pscomp}
An enumeration $\mathcal P=\{p_1,p_2,\dots\}$ of $\fin$ is \emph{prefix--shift compatible} if
$p_1=\emptyword$ and the following hold:
\begin{enumerate}[label=(\arabic*),leftmargin=2.2em]
\item (\emph{prefix-closed}) If $p_k=uv$ with $u\in\fin$ and $v\in\fin\setminus\{\emptyword\}$,
then $u=p_m$ for some $m<k$.
\item (\emph{shift-closed}) If $|p_k|\ge 2$, then $\sigma(p_k)=p_m$ for some $m<k$.
\end{enumerate}
\end{definition}

\begin{remark}\label{rem:exist-pscomp}
A prefix--shift-compatible enumeration exists. Indeed, fix an enumeration
$\A=\{a_1,a_2,\dots\}$ and, for each $N\ge1$, let
\[
B_N:=\{u\in\fin:\ |u|\le N \text{ and every letter of }u\text{ belongs to }\{a_1,\dots,a_N\}\}.
\]
Then each $B_N$ is finite, $B_N\subseteq B_{N+1}$, and $\bigcup_{N\ge1}B_N=\fin$.

Enumerate $\fin$ block by block, listing first $B_1$, then the new words in $B_2\setminus B_1$,
then those in $B_3\setminus B_2$, and so on. Inside each finite block $B_N\setminus B_{N-1}$,
choose any order extending the finite dependency relation generated by:
\begin{itemize}
\item proper prefixes (list every proper prefix of $u$ before $u$), and
\item the left shift (if $|u|\ge2$, list $\sigma(u)$ before $u$ whenever $\sigma(u)$ lies in the same block).
\end{itemize}
This is possible because the dependency relation is acyclic (it strictly decreases word length).

Now let $u$ be any word appearing in some block $B_N\setminus B_{N-1}$. Every proper prefix of $u$
and, if $|u|\ge2$, the word $\sigma(u)$ belong to $B_N$ (they use the same letters and have smaller length),
hence they are listed either in an earlier block or earlier in the same block. Therefore the resulting
enumeration is prefix--shift-compatible. In particular, it is shift-compatible.
\end{remark}

\begin{proposition}[Lipschitz shadowing for a prefix--shift-compatible OTW metric]\label{prop:lipschitz-holds}
Let $\A$ be countably infinite, and let $\full$ be the OTW full shift.
Fix a prefix--shift-compatible enumeration $\mathcal P=\{p_1,p_2,\dots\}$ of $\fin$
(see Remark~\ref{rem:exist-pscomp}) and write $d_0:=d_{\mathcal P}$.
Then $(\full,\sigma,d_0)$ has the Lipschitz shadowing property.
In fact, one may take Lipschitz constant $L=2$.
\end{proposition}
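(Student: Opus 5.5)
The plan is to build the shadowing point letter by letter from the first letters of the pseudo-orbit. Then I will show, by induction on word length, that it has exactly the same prefixes among $p_1,\dots,p_i$ as each $x^m$. This is the mechanism of Proposition~\ref{prop:finite-order-unif-Lip-sufficient}, and the two closure conditions of Definition~\ref{def:pscomp} are what make it work.

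\textbf{Reduction.} Recall that $d_0$ only takes the values $0$ and $2^{-k}$. Moreover, $d_0(x,y)\le 2^{-(i+1)}$ holds exactly when $\chi_x(p_j)=\chi_y(p_j)$ for all $j\le i$. Take $\delta_0=1/2$ and $\delta\in(0,\delta_0]$, and let $i\ge1$ be maximal with $\delta\le 2^{-i}$, so that $2^{-(i+1)}<\delta\le 2^{-i}$.

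Let $(x^m)_{m\ge0}$ be a $\delta$-pseudo-orbit. In the Deaconu--Renault sense every $x^m$ lies in $Dom(\sigma)$, so $|x^m|\ge1$. Since $d_0(\sigma(x^m),x^{m+1})<\delta\le 2^{-i}$, we get
\[
p_j\preceq \sigma(x^m)\iff p_j\preceq x^{m+1}\qquad\text{for all } j\le i,\ m\ge0 .
\]
Write $W_i:=\{p_1,\dots,p_i\}$. By Definition~\ref{def:pscomp}, $W_i$ contains $\emptyword$, is closed under taking prefixes, and is closed under $\sigma$ applied to words of length $\ge2$. These closure properties are the only features of $\mathcal P$ that will be used.

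\textbf{Construction and key claim.} Define $z\in\A^{\N}$ by $z_{m+1}:=$ the first letter of $x^m$, for $m\ge0$. Since $z$ is an infinite word, $\sigma^m(z)$ is defined for every $m$, so $z$ is an admissible shadowing candidate. I will prove by induction on $|u|$ that for every $u\in W_i$ and every $m\ge0$,
\[
u\preceq\sigma^m(z)\iff u\preceq x^m .
\]
\begin{itemize}
\item Case $|u|=0$: both sides hold trivially.
\item Case $|u|=1$, say $u=a$: both sides say that the first letter of $x^m$ is $a$.
\item Case $|u|\ge2$: write $u=a v$ with $a\in\A$ and $v=\sigma(u)$. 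Shift-closedness gives $v\in W_i$ with $|v|<|u|$. Then $u\preceq\sigma^m(z)$ iff $z_{m+1}=a$ and $v\preceq\sigma^{m+1}(z)$. By the induction hypothesis, this holds iff $x^m$ starts with $a$ and $v\preceq x^{m+1}$. By the displayed pseudo-orbit equivalence, which applies because $v\in W_i$, this holds iff $x^m$ starts with $a$ and $v\preceq\sigma(x^m)$. That is exactly $u\preceq x^m$, using that $|x^m|\ge1$ and that $\sigma$ deletes the first letter.
\end{itemize}

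\textbf{Conclusion.} The claim gives $d_0(\sigma^m(z),x^m)\le 2^{-(i+1)}<\delta\le 2\delta$ for all $m$. Hence Lipschitz shadowing holds with $L=2$, and in fact with any $L\ge1$.

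\textbf{Main obstacle.} The delicate step is the inductive claim. One must check two things carefully. First, every word invoked in the induction, namely the tail $\sigma(u)$, really lies in $W_i$; this is exactly where shift-closedness is needed, and it is what fails for the enumeration $\mathcal Q$ of Lemma~\ref{lem:bad-enum}. Second, the argument must cover the edge cases in which some $x^m$ is a finite word, possibly of length $1$ so that $\sigma(x^m)=\emptyword$. There the prefix equivalences must be read correctly for finite words: for instance $v\preceq\sigma(x^m)$ fails whenever $|v|>|x^m|-1$. I would verify these cases directly from the prefix relation of \S\ref{OTWfullsec}.

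Prefix-closedness is not needed in this argument. I expect it is included in Definition~\ref{def:pscomp} to make the enumeration natural or to serve other uses, and I would double-check that it is genuinely dispensable here.
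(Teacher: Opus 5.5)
Your argument is correct and is essentially the paper's proof. Both reduce to prefix agreement on an initial segment $\{p_1,\dots,p_i\}$, build the shadowing point from one-letter data, and run the same induction on $|u|$ via $u=c\,\sigma(u)$ using shift-closedness. The paper instead sets $x_{n+1}$ only from one-letter words lying in $F$, with a fresh fallback letter $z$, so it needs prefix-closedness to get $(c)\in F$; your choice of the actual first letter of $x^m$ is legitimate because $|x^m|\ge1$ in the Deaconu--Renault convention, and it makes prefix-closedness genuinely dispensable, as you suspected. Your finer choice of $i$ with $2^{-(i+1)}<\delta\le 2^{-i}$ also correctly yields shadowing error $<\delta$, so $L=1$ already suffices.
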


\begin{proof}
Let $\delta_0:=1/4$ and fix $0<\delta\le \delta_0$.
Choose $j\ge 2$ such that
\begin{equation}\label{eq:choosej-L2}
2^{-j}\le \delta < 2^{-(j-1)}.
\end{equation}
Set
\[
N:=j-1,
\qquad
F:=\{p_1,\dots,p_N\}.
\]
Let $\{x^n\}_{n\ge0}$ be a $\delta$-pseudo-orbit, i.e.
\[
d_0(\sigma(x^n),x^{n+1})<\delta
\qquad (n\ge0).
\]
Since $d_0$ takes values in $\{0\}\cup\{2^{-k}:k\in\N\}$, \eqref{eq:choosej-L2} implies
\[
d_0(\sigma(x^n),x^{n+1})<2^{-N}
\qquad (n\ge0).
\]
Hence, by the definition of $d_0$, we have the prefix agreement
\begin{equation}\label{eq:prefix-agree-L2}
\forall\,u\in F,\ \forall\,n\ge0,\qquad
u\preceq \sigma(x^n)\iff u\preceq x^{n+1}.
\end{equation}

For each $n\ge0$, define
\[
S_n:=\{u\in F:\ u\preceq x^n\}.
\]
Because the enumeration is prefix--shift-compatible, the finite set $F$ has the following closure properties:
\begin{itemize}
\item if $u\in F$ and $v$ is a proper prefix of $u$, then $v\in F$;
\item if $u\in F$ and $|u|\ge 2$, then $\sigma(u)\in F$.
\end{itemize}

\medskip\noindent
\textbf{Step 1: define a candidate shadowing point from one-letter information.}
Choose a letter $z\in\A$ such that
\[
z \not\preceq \{p_1,\dots,p_N\},
\]
i.e.\ $z$ does not occur in any of the words $p_1,\dots,p_N$.
(This is possible because only finitely many letters occur in $p_1,\dots,p_N$.)

Define $x=x_1x_2x_3\cdots\in\A^{\N}\subseteq\full$ recursively by:
for each $n\ge0$,
\[
x_{n+1}:=
\begin{cases}
a, & \text{if there exists a (necessarily unique) one-letter word } (a)\in F \text{ with } (a)\in S_n,\\
z, & \text{if no one-letter word in }F\text{ belongs to }S_n.
\end{cases}
\]
Uniqueness is clear because a point of $\full$ has at most one one-letter prefix.

\medskip\noindent
\textbf{Step 2: matching prefix tests on the finite set $F$.}
We claim that for every $n\ge0$ and every $u\in F$,
\begin{equation}\label{eq:main-claim-L2}
u\preceq \sigma^n(x)\iff u\in S_n
\qquad\bigl(\text{equivalently, }u\preceq \sigma^n(x)\iff u\preceq x^n\bigr).
\end{equation}
We prove this by induction on $|u|$.

\smallskip\noindent
\emph{Base case $|u|=0$.}
Then $u=\emptyword$, and $\emptyword\preceq y$ for every $y\in\full$, so \eqref{eq:main-claim-L2} is trivial.

\smallskip\noindent
\emph{Base case $|u|=1$.}
Let $u=(a)\in F$. Then
\[
u\preceq \sigma^n(x)\iff x_{n+1}=a.
\]
By the definition of $x_{n+1}$, this is equivalent to $(a)\in S_n$.
Hence \eqref{eq:main-claim-L2} holds for all $u\in F$ with $|u|=1$.

\smallskip\noindent
\emph{Inductive step.}
Assume \eqref{eq:main-claim-L2} holds for all words in $F$ of length at most $\ell-1$, where $\ell\ge2$,
and let $u\in F$ with $|u|=\ell$.
Write
\[
u=c\,v,
\qquad c\in\A,\quad v=\sigma(u)\in\fin.
\]
Since the enumeration is prefix--shift-compatible, we have $(c)\in F$ (prefix closure) and $v\in F$ (shift closure).

Now, for every $n\ge0$,
\begin{align*}
u\preceq \sigma^n(x)
&\iff \bigl((c)\preceq \sigma^n(x)\bigr)\ \text{and}\ \bigl(v\preceq \sigma^{n+1}(x)\bigr)\\
&\iff \bigl((c)\in S_n\bigr)\ \text{and}\ \bigl(v\in S_{n+1}\bigr),
\end{align*}
where the second equivalence uses the base case (for $(c)$) and the induction hypothesis (for $v$).

On the other hand, since $u=c\,v$,
\[
u\preceq x^n
\iff \bigl((c)\preceq x^n\bigr)\ \text{and}\ \bigl(v\preceq \sigma(x^n)\bigr).
\]
Because $(c)\in F$, the first condition is equivalent to $(c)\in S_n$.
Because $v\in F$, \eqref{eq:prefix-agree-L2} gives
\[
v\preceq \sigma(x^n)\iff v\preceq x^{n+1}
\iff v\in S_{n+1}.
\]
Therefore
\[
u\preceq x^n
\iff \bigl((c)\in S_n\bigr)\ \text{and}\ \bigl(v\in S_{n+1}\bigr),
\]
which matches the criterion obtained for $u\preceq \sigma^n(x)$.
Hence \eqref{eq:main-claim-L2} holds for $u$, completing the induction.

\medskip\noindent
\textbf{Step 3: estimate the shadowing error.}
Fix $n\ge0$. By \eqref{eq:main-claim-L2}, for every $i\le N$ we have
\[
p_i\preceq \sigma^n(x)\iff p_i\preceq x^n.
\]
Thus the first index at which the prefix tests for $\sigma^n(x)$ and $x^n$ differ is \(>N\), and therefore
\[
d_0(\sigma^n(x),x^n)<2^{-N}=2^{-(j-1)}\le 2\delta.
\]
Since this holds for every $n\ge0$, the point $x$ $2\delta$-shadows the given $\delta$-pseudo-orbit.

Therefore $(\full,\sigma,d_0)$ has the Lipschitz shadowing property with Lipschitz constant $L=2$.
\end{proof}



\bibliographystyle{abbrv}
\bibliography{ref1}

\end{document}